\theoremstyle{plain}
 \newtheoremstyle{miestilo}{12pt}{\topsep}{\itshape}{}{\bf}{}{ }{}
 \theoremstyle{miestilo}
\newtheorem{theorem}[subsection]{Theorem.}
\newtheorem{proposition}[subsection]{Proposition.}
\newtheorem{lemma}[subsection]{Lemma.}
 \newtheoremstyle{misnotas}{12pt}{12pt}{}{}{\bf}{}{ }{\remark}
 \theoremstyle{misnotas}
 \newtheorem{remark}[subsection]{\ {\bf Remark.}}
  \newtheorem{p}[subsection]{\ {\ }}
\begin{document}

\flushbottom

 \medskip
  {\large\centerline{\bf   Jordan 3-graded Lie algebras with polynomial identities}}

 \medskip

\centerline{ Fernando Montaner\footnote{F. Montaner (fmontane@unizar.es). Partially supported   by grant  PID2021-123461NB-C21, funded
by MCIN/AEI/10.13039/501100011033 and by ``ERDF A way of making Europe''
and
grant E22-23 Álgebra y Geometría, Gobierno de Aragón.).}} \centerline{{\sl Departamento de Matem\'{a}ticas,
Universidad de Zaragoza}} \centerline{{\sl 50009 Zaragoza,
Spain}}   \centerline{and}
\centerline{ Irene Paniello \footnote{I. Paniello (irene.paniello@unavarra.es). Partially supported   by grant  PID2021-123461NB-C21, funded
by MCIN/AEI/10.13039/501100011033 and by ``ERDF A way of making Europe''
and
grant E22-23 Álgebra y Geometría, Gobierno de Aragón.).}} \centerline{{\sl
Departamento de Estad\'{\i}stica, Inform\'{a}tica y  Matem\'{a}ticas, Universidad
P\'{u}blica de Navarra}}
 \centerline{{\sl 31006 Pamplona, Spain}}

 \medskip

\begin{abstract}  We study Jordan 3-graded Lie algebras satisfying 3-graded polynomial identities.
Taking advantage of the Tits-Kantor-Koecher construction, we interpret the PI-condition in terms of their associated Jordan pairs, which allows us to formulate  an analogous of Posner-Rowen Theorem for strongly prime PI Jordan 3-graded Lie algebras. Arbitrary PI Jordan 3-graded Lie algebras are also described by introducing the Kostrikin radical of the Lie algebras. \end{abstract}

\noindent{\bf  Keywords:}    Jordan Lie algebra, polynomial identity,  TKK-construction, central closure

\noindent{\bf 2010 Mathematics Subject Classification:}  17B01, 17B05, 17C05

\section{Introduction}

One of the main structure theorems of associative PI-algebras is
   Posner theorem for prime associative algebras. The classical, associative, version of this theorem states that any prime ring satisfying a polynomial identity over its centroid is a Goldie ring  and  it has a primitive PI ring of quotients \cite{herstein-carus}.

A Jordan analogous  of Posner, or more accurately of Posner-Rowen, theorem was settled in \cite{pi-ii} for strongly Jordan systems having local algebras  satisfying polynomial identities. This result was named  Posner-Rowen, instead of Posner theorem, since it makes use of the notion of extended central closure  that replaces that of the classical central closure construction for Jordan systems.  This result was later extended in   \cite{hpi},  since as conjectured in \cite{pi-ii}, extended centroids of strongly prime homotope PI Jordan systems coincide with the field of fractions of their centroids.

   In the associative pair setting, the  classical results of the associative PI-theory have been recently extended  in \cite{mp-lma pares} to associative pairs.  These results include, aimed by \cite{pi-i},  the treatment of the notion of PI-element for associative pair,  and also associative pair analogous  of Amitsur (already proved in \cite{id-pares}), Kaplansky and Martindale theorem, and also that of
      Posner-Rowen  theorem for prime associative pairs satisfying homotope polynomial identities,  which is again formulated in terms of central closures of the associative pairs and those of their standard imbeddings.

 In this paper we  address the structure theorems of Jordan 3-graded Lie algebras satisfying 3-graded polynomial identities, more precisely, we consider the
  Jordan 3-graded Lie analogous of Posner-Rowen theorem. Given the available results for strongly prime  homotope PI Jordan pairs proved in \cite{pi-ii}, and the fact that Jordan PI pairs are homotope PI \cite{id-pares}, there are two more issues to be tackled. The first one, the relationship between Jordan 3-graded Lie algebras and Jordan pairs is settled by the Tits-Kantor-Koecher construction \cite{ka1,ka2,ka3,ko1,ko2,tits}, that   provides us with a suitable channel connecting Jordan and 3-graded Lie constructions, as for instance, those of the extended centroids and (extended) central closures. The second issue is to ensure that the considered 3-graded Lie polynomial identities are smoothly transferred to the associated Jordan pairs. To do this we will consider the relationship, again established through the   Tits-Kantor-Koecher construction, between the corresponding Lie and Jordan free objects, to check that there exists a operant version of  3-graded Lie polynomial identity, named essential identity \cite{z-grad}, that allows us to transfer the PI condition from Jordan 3-graded Lie algebras to their associated Jordan pairs.

After this introductory section, and a section of preliminaries, in the third section we settle the relationship between the extended centroid of a nondegenerate Jordan pair and that of its TKK-algebra. The natural isomorphism between both constructions obtained in this section, is considered in section four  to prove the existence of an isomorphism between the central closure of the TKK-algebra  and the TKK-algebra of the extended central closure of a nondegenerate Jordan pair.

We recall here that the notions of extended centroid and central closure were first developed to study prime associative rings satisfying generalized polynomial identities \cite{gpi}, and later extended to the non-associative framework in \cite{baxter-martindale,emo}. The Jordan counterpart of these constructions, named extended centroid and extended central closure  were introduced in \cite{pi-ii}, precisely aimed to study Jordan systems having nonzero local algebras satisfying   polynomial identities. One of the main results in \cite{pi-ii}  is precisely the Jordan version of Posner-Rowen theorem for strongly prime Jordan pairs, formulated in terms of the extended central closure of the Jordan pairs.

In section five we settle the operant version of polynomial identity we consider  here for Jordan 3-graded Lie algebras. Being our purpose to take advantage of the available results for PI Jordan pairs proved in \cite{pi-ii,id-pares}, it becomes necessary to ensure that  the condition of being a PI-algebra can be smoothly transferred   through the TKK-construction, that is, that the 3-graded polynomial identities satisfied by the Jordan 3-graded Lie algebras generate essential identities of their associated Jordan pairs  \cite[0,12]{pi-ii}. To ensure this, we consider the notion of essential polynomial identity \cite{z-grad}, and characterize  essential 3-graded polynomial identities as those 3-graded polynomials of the free 3-graded Lie algebra \cite{neher-libres} not vanishing is some special Lie algebra $sl(n)$. This condition ensures us that such a 3-graded polynomial identity of a Jordan 3-graded Lie algebra produces an essential Jordan polynomial identity for the associated Jordan pair.

The last two sections of this paper are devoted to   study   Jordan 3-graded Lie algebras satisfying essential 3-graded polynomial identities. In section six we assume   these Lie algebras to be strongly prime, and therefore the TKK-algebras of their associated Jordan pairs, which then result   to be also strongly prime \cite{FL-libro}. Then the isomorphism obtained in section four relating TKK-constructions and (extended) central closures  together to  the Jordan version of Posner-Rowen theorem \cite{pi-ii}, provide us with a Jordan 3-graded Lie version of Posner-Rowen theorem for strongly prime Lie algebras. Additionally, considering    all involved objects defined over a base field of characteristic zero or prime al least five,  Zelmanov's description of Lie algebras  with a finite nontrivial $\mathbb{Z}$-grading \cite{z-grad} applies here providing an accurate description of the central closures of the   Jordan 3-graded Lie algebras satisfying essential 3-graded polynomial identities.

Finally  we achieve to describe general Jordan 3-graded Lie algebras satisfying essential 3-graded polynomial identities, under no additional regularity requirements. After discarding the
  strongly primeness of the involved algebras,  we bring into   suitable radical ideals that provide us with nondegenerate quotient Lie algebras and Jordan pairs. This is the case of the Kostrikin radical of a Lie algebra \cite{z-radical}, that we prove here to be related, when the Lie algebra is assumed to be Jordan 3-graded, to the McCrimmon radical of its associated Jordan pair \cite{mc-radical}. McCrimmon radical is the precisely the Jordan  radical that gives rise to    nondegenerate quotient  Jordan pairs.

   Considering the quotient of  a Jordan 3-graded Lie algebra by its Kostrikin radical, that turns easily out to be 3-graded, provides us with a nondegenerate Jordan 3-graded Lie algebra that can be written as a subdirect product of strongly prime Jordan 3-graded Lie algebras, all them
   still satisfying the same essential polynomial identity, and therefore isomorphic to one of the algebras listed in the Jordan 3-graded version of Posner-Rowen theorem obtained in the previous section.

\section{Preliminaries}

\begin{p}
We will work with Jordan pairs and Lie algebras over $\Phi$,    a  unital commutative ring
containing $\frac{1}{2}$ that will be fixed throughout. We
 refer to \cite{jac-lie,loos-jp}
for basic notation, terminology and results on Jordan pairs and Lie algebras.
\end{p}

\begin{p}
A   Jordan pair  $V=(V^+,V^-)$ has products $Q_xy$ for $x\in
V^\sigma$ and $y\in V^{-\sigma}$,\ $\sigma=\pm$, with
linearizations
$Q_{x,z}y=D_{x,y}z=\{x,y,z\}=Q_{x+z}y-Q_xy-Q_zy$.
\end{p}

\begin{p}
   A   Lie algebra   is a $\Phi$-module $L$ with a bilinear  product, denoted   $[x,y]$, satisfying $[x,x]=0$    and the
   Jacobi  identity
  $[[x,y],z]+[[y,z],x]+[[z,x],y]=0$ for all
  $x$,\ $y$,\ $z\in L$.\end{p}

\begin{p}\label{def lie}
  A Lie algebra $L$ is   3-graded  if it admits  a decomposition
  $L=L_1\oplus L_0\oplus L_{-1}$,  where  $L_i$  is a
$\Phi$-submodule of   $L$, for  $i\in\{0,\pm1\}$ and   $[L_i,L_j]\subseteq L_{i+j}$, with  $L_{i+j}=0$ if $i+j\not\in
\{0,\pm1\}$.
A 3-graded Lie algebra   $L=L_1\oplus L_0\oplus L_{-1}$ is    Jordan 3-graded  if $[L_1,L_{-1}]=L_0$, and the
pair  $(L_1,L_{-1})$ admits a
 Jordan pair structure defined  by
$\{x,y,z\}=[[x,y],z]$, for any $x$,\, $z\in L_\sigma$,\, $y\in L_{-\sigma}$,\, $\sigma=\pm$.
Then     $V=(L_1,L_{-1})$ is  called the     Jordan
pair associated to $L$ \cite[1.5]{neher-118}.
\end{p}

\begin{p} Since  ${1\over2}\in \Phi$, by  \cite[Proposition 2.2(a)]{loos-jp} and  \ref{def lie}(ii) above,
  any Jordan
 3-graded Lie  algebra $L$   determines  a  Jordan product on $V=(L_1,L_{-1})$  given by $Q_xy=\frac{1}{2}
\{x,y,x\}=\frac{1}{2}  [[x,y],x]$. If, moreover,
   ${1\over6}\in \Phi$,    by   \cite[Proposition~2.2(b)]{loos-jp},   any 3-graded Lie algebra  $L=L_1\oplus L_0\oplus L_{-1}$ defines   a Jordan pair structure on   $(L_1,L_{-1})$. Indeed,  if  ${1\over6}\in \Phi$,  any pair of $\Phi$-modules   $(L_1,L_{-1})$ endowed  with trilinear mappings
\[\begin{array}{ccl}\{\ ,\ ,\ \}: L_\sigma  \times L_{-\sigma}\times L_\sigma  &\to  & L_\sigma\\
\qquad\qquad (x,y,z) & \mapsto &\{x,y,z\}=D(x,y)z\end{array}\]
satisfying
\begin{enumerate}
\item[(i)] $\{x,y,z\}=\{z,y,x\}$,%
\item[(ii)] $[D(x,y),D(u,v)]=D(\{x,y,u\},v)-D(u,\{y,x,v\})$, \end{enumerate}
for all  $x$,\ $z$,\ $u\in L_\sigma$, $y$,\ $v\in L_{-\sigma}$, $\sigma=\pm$, has a Jordan pair structure.
\end{p}

\begin{p} Any Jordan pair $V$ defines   a
 Jordan 3-graded Lie algebra  given by the Tits-Kantor-Koecher  construction, that for
any Jordan pair  $V=(V^+,V^-)$ gives rise to a 3-graded Lie algebra  $L=L_1\oplus L_0\oplus L_{-1}$  with $L_1=V^+$ and
$L_{-1}=V^-$.  These Lie algebras were (independently) introduced by
   Kantor \cite{ka1,ka2,ka3},
  Koecher \cite{ko1,ko2} and Tits \cite{tits}, and  also studied by Meyberg in \cite{meyberg}. We refer to \cite[11.2]{FL-libro} for accurate details on this construction and write $TKK(V)=V^+\oplus \hbox{IDer} V\oplus V^-$ for the  TKK-Lie algebra of a Jordan pair $V$, where
  $\hbox{IDer} V$ is the ideal of the Lie algebra of derivations $\hbox{Der} V$ of $V$ generated by the inner derivations
    $\delta(x,y):=(D_{x,y},-D_{y,x})$   of $V$,
 for all
 $x\in V^+$, $y\in V^-$.
 \end{p}

 \begin{p}\label{central extension} The relationship between
 Jordan 3-graded Lie algebras and   TKK-algebras was settled by Neher in
  \cite[1.5(6)]{neher-118}. For   any
  Jordan 3-graded Lie algebra $L$ with associated Jordan pair $V$, it holds that
  $TKK(V)\cong L/C_V$,  where
$C_V=\{x\in L_0\mid [x,L_1]=0=[x,L_{-1}]\}=Z(L)\cap L_0$ and $Z(L)=\{x\in L\mid [x,L]=0\}$ denotes the center of $L$.
 \end{p}

\begin{p} A Lie algebra $L$ is    $\mathbb{Z}$-graded  if there exists a decomposition
   $L= \sum_{n\in\mathbb{ Z}} L_n$,   where   $L_n$  is a $\Phi$-submodule of $L$  for all
   $n\in\ \mathbb{Z}$,  and $[L_i,L_j]\subseteq L_{i+j}$  for  $i$,\ $j\in \mathbb{Z}$. Such a grading is
   finite if the set $\{n\in \mathbb{Z}\mid
 L_n\neq0\}$ is finite, and   nontrivial if $\sum_{n\neq0}L_n\neq0$. TKK-algebras of Jordan pairs are
  $\mathbb{Z}$-graded Lie algebras
   $L=L_{-1}+L_0+L_1$, with $L_{-1}=V^-$,\ $L_0=\hbox{IDerV}$,\  $L_1=V^+$  and
         $L_i=0$ for $\vert i\vert>1$ \cite{z-grad}.
\end{p}

\begin{p}
An ideal $I$ of a Jordan pair $V$ is called  essential    if it has nontrivial intersection with any nonzero ideal of $V$.
Essential ideals of nondegenerate Jordan pairs are those having zero annihilator.
Similarly,  an ideal $I$ of a Lie algebra $L$ is    essential  if and only if it has nontrivial intersection with any nonzero
ideal of $L$. Essential ideals of semiprime Lie algebras are those with zero centralizer.  Regularity conditions, such as (semi)primeness, nondegenerancy  or simplicity can  be
transferred between Jordan pairs and their TKK-algebras \cite[Proposition 11.25]{FL-libro}.
\end{p}

\begin{p}
The notions of extended centroid and central closure were first introduced to study prime rings satisfying generalized polynomials
identities \cite{gpi}, and later   generalized to nonassociative rings   \cite{baxter-martindale,emo}.

Let $L$ be a semiprime Lie algebra. We denote by $Ad(L)$ the subalgebra of $End_\Phi(L)$ generated by $ad(L)=\{ad\, x\mid x\in L\}$. A pair $(f,I)$ is a
  permissible map of $L$  if $I$ is an essential ideal of $L$ and $f:I\to L$ is an additive map that commutes with all elements of
$Ad(L)$.  Following  \cite{baxter-martindale,emo}  we will say that two  permissible maps $(f_1,I_1)$ and $(f_2,I_2)$ of a Lie algebra $L$ are equivalent  if there exists an essential ideal $I$ of
$L$, contained into $I_1\cap I_2$, such that $f_1(x)=f_2(x)$ for all $x\in I$.  This defines an equivalence relation on the set of permissible maps of $L$. We
will denote by $\overline{(f,I)}$ the equivalence class determined by the permissible map $(f,I)$, and by  ${\cal C}(L)$ the set of all
equivalence classes of permissible maps of $L$. The set  ${\cal C}(L)$   is called the  extended centroid of $L$. If $L$ is a
  a semiprime Lie algebra, then ${\cal C}(L)$ is a von Neumann regular unital algebra \cite[Theorem 2.5]{baxter-martindale}.
\end{p}

\begin{p}  Extended centroids of quadratic Jordan systems were introduced in
   \cite{pi-ii}.
Let $V$ be a Jordan pair and   $U=(U^+,U^-)$   an ideal of $V$. A pair $g=(g^+,g^-)$ of linear mappings  $g^\sigma:U^\sigma\to
V^\sigma$, $\sigma=\pm$, is a {\it $V$-homomorphism} of  $V$ if for all  $y^\sigma\in U^\sigma$,\ $x^\sigma$,\
$z^\sigma\in V^\sigma$:
\begin{enumerate}
 \item[(a)]\ $g^\sigma(Q_{x^\sigma}y^{-\sigma})=Q_{x^\sigma}g^{-\sigma}(y^{-\sigma})$,
\item[(b)]\ $g^\sigma(Q_{I^\sigma}V^{-\sigma})\subseteq I^\sigma$ and
$(g^\sigma)^2(Q_{y^\sigma}x^{-\sigma})=Q_{g^\sigma(y^\sigma)}x^{-\sigma}$,
\item[(c)]\ $g^\sigma(\{y^\sigma,z^{-\sigma},x^\sigma\})=\{g^\sigma(y^\sigma),z^{-\sigma},x^\sigma\}$.\end{enumerate}
 We denote by  $Hom_V(U,V)$ the set of all $V$-homomorphisms from $U$ into $V$ \cite[1.1]{pi-ii}.
 A pair $(g,U)$ is a   permissible map of $V$  if $U$ is an essential ideal of $V$.

Two permissible maps  $(g_1,U_1)$ and $(g_2,U_2)$ of a Jordan pair $V$ are equivalent,  denoted by $(g_1,U_1)\sim(g_2,U_2)$,
  if there exists an essential
 ideal $U$ of $V$, contained into $U_1\cap U_2$, such that $g_1^\sigma(x)=g_2^\sigma(x)$ for all $x\in U^\sigma$,\ $\sigma=\pm$.
 Again, this defines
  an equivalence relation, with classes denoted by $[g,U]$,  on the set of all permissible maps of $V$, with
 quotient set  ${\cal C}(V)$    called the   extended centroid of $V$.
The extended centroid ${\cal C}(V)$ of a nondegenerate Jordan pair $V$  is a commutative, associative, unital (von Neumann) regular
algebra \cite[Theorem 1.15,  Proposition 2.7]{pi-ii}.
\end{p}

\begin{p}
Let $(g,U)$ be a permissible map of a   Jordan pair   $V$, and   $K$   a nonzero ideal of $V$ contained in $U$. We will say that
  $g$ restricts to  $K$ if $g_K\in Hom_V(K,V)$,  where  $g_K$ denotes the
restriction of  $g$ to $K$. A necessary and sufficient condition for $g$ to restrict to $K$ is   $g^\sigma(Q_{K^\sigma}
V^{-\sigma})\subseteq K^\sigma$, $\sigma=\pm$ \cite[p.~484]{pi-ii}.
\end{p}

\begin{p} The corresponding escalar extension given by the extended centroid is called central closure. We refer the reader to \cite{baxter-martindale} for the construction of the central closure  ${\cal C}(L)L$ of a semiprime Lie algebra $L$, and to \cite{pi-ii} for that of the  extended central closure  ${\cal C}(V)V$ of a  nondegenerate Jordan pair $V$.
\end{p}

 \section{The extended centroid of the TKK-algebra of a nondegenerate Jordan pair}

This section is aimed to study   the relationship   between the  extended centroid
   of a nondegenerate Jordan pair $V$  and that of its
TKK-algebra $TKK(V)$.

\begin{p}\label{ideales-relacion-p} Let $L$ be a  Jordan 3-graded Lie algebra  with associated
Jordan pair~$V$. The following assertions are straightforward:\begin{enumerate} \item[(i)] For any ideal $I$ of
$L$,  $I\cap V=(I\cap V^+,I\cap V^-)$  and
$\pi(I)=(\pi_+(I),\pi_-(I))$ are ideals of  $V$  such that
$\pi(I)^3\subseteq I\cap V\subseteq \pi(I)$, where $\pi_\sigma$
denotes the canonical projections of $L$ on $L_\sigma$,\
$\sigma=\pm$.
  Thus, if  $V$ is semiprime and $I$ nonzero,  then   $I\cap V$ is a nonzero
ideal of $V$.
\item[(ii)] If  $U=(U^+,U^-)$ is a nonzero ideal of $V$, then
  ${\cal I}(U)=U^+\oplus([U^+,V^-]+[V^+,U^-])\oplus U^-$ is a nonzero ideal of $L$.\end{enumerate}\end{p}

 \begin{lemma}\label{ideales-relacion-lema} Let   $V$  be a semiprime Jordan
 pair.
\begin{enumerate}\item[(i)]
 If $I$ is an essential ideal  of   $TKK(V)$,   then  $I\cap V$ is an
essential ideal of $V$.
\item[(ii)]  If    $U=(U^+,U^-)$ is an essential ideal of
  $V$, then ${\cal I}(U)$ is an essential ideal of $ TKK(V)$.\end{enumerate}
\end{lemma}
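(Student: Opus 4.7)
The plan is to prove each direction by taking an arbitrary nonzero ideal in the target algebra, transferring it across the TKK-construction using the two constructions from \ref{ideales-relacion-p} (namely $J \mapsto J \cap V$ from Lie ideals to Jordan pair ideals, and $U \mapsto \mathcal{I}(U)$ in the other direction), and then using essentiality on the other side.

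For part (i), let $W$ be any nonzero ideal of $V$. I would form the nonzero Lie ideal $\mathcal{I}(W) \trianglelefteq TKK(V)$ from \ref{ideales-relacion-p}(ii), apply essentiality of $I$ to get that $J := I \cap \mathcal{I}(W)$ is a nonzero ideal of $TKK(V)$, and then apply \ref{ideales-relacion-p}(i) to $J$ (using that $V$ is semiprime) to conclude $J \cap V$ is a nonzero ideal of $V$. Since $\mathcal{I}(W) \cap V^\sigma = W^\sigma$ by the very definition of $\mathcal{I}(W)$, the ideal $J \cap V$ sits inside $(I \cap V) \cap W$, giving essentiality of $I \cap V$ in $V$.

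For part (ii), I would dualize: let $J$ be a nonzero ideal of $TKK(V)$. Applying \ref{ideales-relacion-p}(i), $J \cap V$ is a nonzero ideal of $V$ because $V$ is semiprime. Essentiality of $U$ then gives $(J \cap V) \cap U \neq 0$ in some sign $\sigma$. Since $U^\sigma \subseteq \mathcal{I}(U)$, this intersection lies inside $J \cap \mathcal{I}(U)$, which is therefore nonzero.

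The only subtle point is the step $J \cap V \neq 0$ used in both parts: a priori one might worry about a nonzero ideal $J$ of $TKK(V)$ sitting entirely in $L_0 = \mathrm{IDer}\, V$, but the chain $\pi(J)^3 \subseteq J \cap V \subseteq \pi(J)$ combined with semiprimeness of $V$ rules this out, and this is exactly what \ref{ideales-relacion-p}(i) records. Apart from invoking this fact cleanly, the argument is a short diagram chase between the two correspondences $J \mapsto J \cap V$ and $U \mapsto \mathcal{I}(U)$, relying on the trivial but crucial identity $\mathcal{I}(U) \cap V = U$.
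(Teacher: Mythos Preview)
Your proposal is correct and follows essentially the same approach as the paper's proof: both parts are handled by transferring a nonzero ideal across the TKK-construction via $\mathcal{I}(-)$ or $-\cap V$, invoking \ref{ideales-relacion-p}(i) (with semiprimeness) for the nonvanishing of $J\cap V$, and using $\mathcal{I}(W)\cap V = W$ to identify the intersection. Your additional commentary on the subtle point $J\cap V\neq 0$ simply makes explicit what the paper records in \ref{ideales-relacion-p}(i).
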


\begin{proof}  (i) Let $K$ be a nonzero ideal of $V$. Then, since  $  {\cal I}(K)$ is a nonzero ideal of $TTK(V)$, by the essentiality of $I$,
 we have that $I\cap {\cal I}(K)$
is a nonzero ideal of  $TKK(V)$. Therefore,
   by \ref{ideales-relacion-p}(i), it holds that  $0\neq I\cap {\cal I}(K)\cap V=(I\cap V)\cap
  K$.  Hence $I\cap V$ is an  essential ideal of $V$.

(ii)  Let  now $K$ be a nonzero ideal of $L$. By \ref{ideales-relacion-p}(i),
  $K\cap V$ is a nonzero ideal of   $V$ and thus, by the essentiality of $U$,    it holds that  $U\cap(K\cap V)=U\cap K\neq0$.
Therefore
 $0\neq U^\sigma\cap K\subseteq {\cal I}(U)\cap K$, for some $\sigma=\pm$, implying that
 ${\cal I}(U)$  is an essential ideal of $ TKK(V)$.  \end{proof}

\begin{remark}\label{esencial-nodeg}
\begin{enumerate}
\item[(i)] Lemma \ref{ideales-relacion-lema} applies, in particular,    to
  nondegenerate Jordan pairs, since,   by  \cite[p. 212]{da-hermitian},   nondegenerate Jordan pairs are   semiprime.
\item[(ii)] Taking advantage of the 3-grading of  the TKK-algebras, it follows from
Lemma~\ref{ideales-relacion-lema}(i), that any essential ideal $I$ of the TKK-algebra of a semiprime Jordan pair $V$, contains an essential ideal of the form
 ${\cal I}(U)$, for an essential ideal $U$ of $V$. Indeed, it suffices to consider $U= I\cap V$.
\end{enumerate}
    \end{remark}

\begin{p}\label{def producto} Given two ideals $K=(K^+,K^-)$ and
$I=(I^+,I^-)$ of a Jordan pair~$V$,   the product
 $K\ast
I=(Q_{K^+}I^-+Q_{V^+}Q_{K^-} I^+,Q_{K^-}I^++Q_{V^-}Q_{K^+}I^-)$ is
an ideal of $V$. If $K=V$,  then we have $V\ast
I=(Q_{V^+}I^- ,Q_{V^-}I^+ )$ \cite[p.~221]{mc-inh}.
\end{p}

 \begin{lemma}\label{cambio-ideal}  Let $I$ be an ideal of the $TKK$-algebra  of a Jordan pair $V$.
 Then $\widetilde{I}={\cal I}\big(V\ast (\pi_+(I),\pi_-(I))\big)$, where   $\pi_\sigma$
  denotes  the canonical
  projections  from $TKK(V)$ onto  $TKK(V)_\sigma$,   $\sigma\in\{0,\pm \}$,  is an
ideal of  $TKK(V) $ such that:
\begin{enumerate}
\item[(i)]  $\widetilde{I}\subseteq I $,
\item[(ii)]
  $\widetilde{I}\cap V=V\ast(\pi_+(I),
\pi_-(I))=\pi(\widetilde{I})$.
\end{enumerate} Moreover, if  $V$ a is
nondegenerate Jordan pair, then
  $I$ is essential if and only if   $\widetilde{I}$ is essential. \end{lemma}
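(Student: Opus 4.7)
The plan is to exploit the $3$-grading of $TKK(V)$ for parts (i) and (ii), and then combine Lemma~\ref{ideales-relacion-lema} with a Jordan-pair nondegeneracy argument for the ``Moreover'' equivalence.

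For part~(i), the key observation is that although the projection $\pi_\sigma(I)$ of an ideal $I$ need not be contained in $I$, its image under the $Q$-operator is. Writing any $z \in I$ in its $3$-graded components $z = z_1 + z_0 + z_{-1}$ and taking $x \in V^+$, the brackets involving $z_1$ or $z_0$ are killed by $[L_1,L_1] \subseteq L_2 = 0$, so $[[x,z],x] = [[x, \pi_-(z)],x] = 2 Q_x \pi_-(z)$ lies in $I$; since $\tfrac{1}{2} \in \Phi$, this gives $Q_{V^+} \pi_-(I) \subseteq I$, and symmetrically $Q_{V^-} \pi_+(I) \subseteq I$. The middle summand $[Q_{V^+}\pi_-(I), V^-] + [V^+, Q_{V^-}\pi_+(I)]$ of $\widetilde I$ is then forced into $I$ by the ideal property, yielding $\widetilde I \subseteq I$. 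Part~(ii) is immediate from the definition of $\mathcal I(U)$ as a $3$-graded sum: the $V^\pm$-slices of $\widetilde I = \mathcal I(V\ast\pi(I))$ are by construction $(V\ast\pi(I))^\pm$, so $\widetilde I \cap V = V\ast\pi(I) = \pi(\widetilde I)$.

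For the ``Moreover'' equivalence, the implication $\widetilde I$ essential $\Rightarrow I$ essential is immediate from part~(i). For the converse, assume $V$ is nondegenerate and $I$ is essential. Lemma~\ref{ideales-relacion-lema}(i) gives that $I \cap V$ is essential in $V$, so the larger ideal $\pi(I) \supseteq I \cap V$ is essential as well. It therefore suffices to show that $V \ast \pi(I)$ is essential in $V$, for then Lemma~\ref{ideales-relacion-lema}(ii) upgrades this to essentiality of $\widetilde I = \mathcal I(V \ast \pi(I))$. Given any nonzero ideal $K$ of $V$, the intersection $W := K \cap \pi(I)$ is nonzero, and monotonicity of $\ast$ yields $V \ast W \subseteq (V \ast K) \cap (V \ast \pi(I)) \subseteq K \cap (V \ast \pi(I))$, so the task reduces to the claim that for every nonzero ideal $W$ of a nondegenerate Jordan pair $V$, one has $V \ast W \neq 0$.

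The main obstacle is precisely this last claim; the rest is essentially bookkeeping. Suppose, to the contrary, that $V \ast W = 0$, i.e.\ $Q_{V^\sigma} W^{-\sigma} = 0$ for $\sigma = \pm$. Pick $0 \neq a \in W^+$ (the case $W^- \neq 0$ is symmetric). Nondegeneracy of $V$ supplies $b \in V^-$ with $c := Q_a b \neq 0$, and by the ideal property $c \in W^+$. Using the fundamental formula $Q_c = Q_a Q_b Q_a$, for any $d \in V^-$ we have $Q_a d \in W^+$, whence $Q_b(Q_a d) \in Q_{V^-} W^+ = 0$, and therefore
\[
Q_c d = Q_a Q_b Q_a d = 0.
\]
Thus $Q_c V^- = 0$, which by nondegeneracy forces $c = 0$, contradicting $c \neq 0$ and closing the argument.
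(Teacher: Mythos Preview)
Your proof is correct and follows essentially the same route as the paper: the grading computation for (i), the direct reading of the $3$-graded summands for (ii), and the chain ``$I$ essential $\Rightarrow I\cap V$ essential $\Rightarrow \pi(I)$ essential $\Rightarrow V\ast\pi(I)$ essential $\Rightarrow \widetilde I$ essential'' for the ``Moreover'' part all match the paper's argument.

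The one genuine difference is in the step showing $V\ast\pi(I)$ is essential. The paper simply cites \cite[Lemma~1.2(a)]{pi-ii}, which asserts that $V\ast K$ is essential whenever $K$ is an essential ideal of a nondegenerate Jordan pair. You instead give a self-contained argument via the fundamental formula $Q_{Q_ab}=Q_aQ_bQ_a$: if $V\ast W=0$ for a nonzero ideal $W$, then any $0\neq c=Q_ab\in W$ would satisfy $Q_cV^-=Q_aQ_bQ_aV^-\subseteq Q_aQ_{V^-}W^+=0$, contradicting nondegeneracy. This buys independence from the external reference at the cost of a short extra paragraph; the underlying content is the same.
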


 \begin{proof} Let $I$ be an ideal of   $TKK(V)$. We first note that, by \ref{ideales-relacion-p}(i),  $\pi(I)=(\pi_+(I),\pi_-(I))$  is an ideal of $V$  that contains
 $I\cap V$.

 Take now   $l\in TKK(V)$  and elements
  $x_\sigma$,\ $z_\sigma\in TKK(V)_\sigma$,\,   $\sigma=\pm$.  Since  by the 3-grading of $TKK(V)$ we have
$ \{x_\sigma,\pi_{-\sigma}(l),z_\sigma
\}=[[x_\sigma,\pi_{-\sigma}(l)],z_\sigma ]= [[x_\sigma,l],z_\sigma]
$, it follows from  \ref{def producto} that
   $ V\ast(\pi_+(I),\pi_-(I))$ is an ideal of $V$ such that:
\begin{align*}
 V\ast(\pi_+(I),\pi_-(I))&=\big(Q_{V^+}\pi_-(I),Q_{V^-}\pi_+(I)\big)=\\
 &=\big(\{V^+,\pi_-(I),V^+\},\{V^-,\pi_+(I),V^-\}\big)=\\
& = \big([V^+,[V^+,I]],[V^-,[V^-,I]]\big).\end{align*}
By  \ref{ideales-relacion-p}(ii),   $\widetilde{I} = {\cal
I}\big(V\ast(\pi_+(I),\pi_-(I))\big)$
   is an  ideal of $TKK(V)$,   clearly contained in $I$, and  by the 3-grading of   $\widetilde{I}$ it holds that
   $\widetilde{I}\cap
V=V\ast(\pi_+(I), \pi_-(I))$. Hence  $ \widetilde{I}\cap V=V\ast(\pi_+(I),
\pi_-(I))=\pi(\widetilde{I}) $.

Assume now that the Jordan pair $V$ is   nondegenerate, and let  $I$  be an essential ideal of $TKK(V)$. Then, by Lemma \ref{ideales-relacion-lema},
 $I\cap V$  is an essential ideal of $V$  and,
 since
 $I\cap V\subseteq \pi(I)$,  it follows that   $\pi(I)$ is also
essential in $V$.  Then, by  \cite[Lemma 1.2(a)]{pi-ii},  $V\ast
(\pi_+(I),\pi_-(I))$ is  an  essential ideal of $V$, and
by Lemma~\ref{ideales-relacion-lema}(ii) we obtain that
 $\widetilde{I}$ is essential  in $TKK(V)$.
 The converse   follows from the fact that
$\widetilde{I}\subseteq I$.  \end{proof}

 \begin{remark}\label{cons-cambio-ideal} Let $V$ be a nondegenerate Jordan pair, and let    $\lambda=\overline{(f,I)}\in {\cal
 C}(TKK(V))$. By Lemma \ref{cambio-ideal}, the pair
 $(f_{ \widetilde{I}}, \widetilde{I})$, where   $\widetilde{I}={\cal I}\big(V\ast (\pi_+(I),\pi_-(I))\big)$, is a permissible map such that
 $\lambda=\overline{(f,I)}=\overline{(f_{ \widetilde{I}}, \widetilde{I})}$  \cite[Corollary 2.3]{baxter-martindale}.
Thus, since $f\big([V^\sigma,[V^\sigma,I]]
\big)=[V^\sigma,[V^\sigma,f(I)]]\subseteq V^\sigma$ and
\begin{align*}&f\big([[V^+,[V^+,I]],V^-]+ [V^+,
[V^-,[V^-,I]]]\big)=\\
&=[[V^+,[V^+,f(I)]],V^-]+ [V^+, [V^-,[V^-,f(I)]]]\subseteq
[V^+,V^-], \end{align*}
   replacing $(f,I)$ by $(f_{ \widetilde{I}},
\widetilde{I})$, if necessary,  for any
$\lambda=\overline{(f,I)}\in {\cal
 C}(TKK(V))$  we will
assume that $I$ is a 3-graded essential  ideal of $TKK(V)$ such that
$f(I^\sigma)\subseteq TKK(V)_\sigma$, \, $\sigma\in\{0,\pm1\}$, that is, $f$ is a 3-graded map.
\end{remark}

 \begin{proposition}\label{aplicacion-centroides} Let $ TKK(V)$ be the TKK-algebra of a  nondegenerate Jordan pair $V$. Then
  the map:
 \[\begin{array}{ccl}\Psi: {\cal C}(TKK(V)) & \to & {\cal C}(V)\\
\qquad \overline{(f,I )} &\mapsto &[f_{  I\cap V}, I\cap
V]\end{array}\]
 defines a ring homomorphism from the extended centroid  ${\cal C}(TKK(V))$ of $TKK(V)$ to the extended centroid ${\cal C}(V)$  of  $V$.
   \end{proposition}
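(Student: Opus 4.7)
The plan is to establish three things in succession: (1) the formula $(f,I)\mapsto (f_{I\cap V},I\cap V)$ produces a permissible map of $V$ from a permissible map of $TKK(V)$; (2) equivalent representatives are sent to equivalent representatives; and (3) the assignment respects the ring structure. Throughout, I would exploit Remark \ref{cons-cambio-ideal} to replace any representative $(f,I)$ by one where $I$ is a 3-graded essential ideal of $TKK(V)$ and $f$ is a 3-graded map, so that $f((I\cap V)^\sigma)\subseteq V^\sigma$ for $\sigma=\pm$. Lemma \ref{ideales-relacion-lema}(i) then guarantees that $I\cap V=(I\cap V^+,I\cap V^-)$ is an essential ideal of $V$, giving the minimum structural input needed for the target class to lie in $\mathcal{C}(V)$.

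The technical heart of step (1) is verifying the three conditions (a)--(c) of Section 2 defining a $V$-homomorphism. The underlying principle is that the $Q$-operator and triple product unfold via the Jordan 3-grading into iterated Lie brackets, and that $f$ commutes with $\mathrm{ad}\,a$ for every $a\in L$ whenever the element being acted on lies in $I$. For instance, for condition (a), with $x^\sigma\in V^\sigma$ and $y^{-\sigma}\in (I\cap V)^{-\sigma}\subseteq I$, two successive slidings of $f$ through $\mathrm{ad}\,x^\sigma$ give
\[
f(Q_{x^\sigma}y^{-\sigma}) = \tfrac{1}{2}\,f([[x^\sigma,y^{-\sigma}],x^\sigma]) = Q_{x^\sigma}f(y^{-\sigma}),
\]
since both the inner element $y^{-\sigma}$ and the intermediate element $[x^\sigma,y^{-\sigma}]$ lie in $I$. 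Condition (c) runs along the same lines. Condition (b) is the delicate one: for $y^\sigma\in (I\cap V)^\sigma$ and $x^{-\sigma}\in V^{-\sigma}$, one slides $f$ through $\mathrm{ad}\,(-x^{-\sigma})$ anchored at $y^\sigma\in I$ to obtain $f(Q_{y^\sigma}x^{-\sigma})=\tfrac{1}{2}[[f(y^\sigma),x^{-\sigma}],y^\sigma]$, which lies in $I\cap V^\sigma$ because the outer bracket is with $y^\sigma\in I$ and the 3-grading places the result in $V^\sigma$; for the quadratic identity, a second sliding through $\mathrm{ad}\,[f(y^\sigma),x^{-\sigma}]$, again anchored at $y^\sigma\in I$, yields $f^2(Q_{y^\sigma}x^{-\sigma})=Q_{f(y^\sigma)}x^{-\sigma}$. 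This last step is the main obstacle, because the anchor element $f(y^\sigma)$ is no longer in $I$, so one cannot slide through $\mathrm{ad}\,y^\sigma$ at $f(y^\sigma)$; the argument works only because we re-bracket so that the element in $I$ is always the one being acted on.

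For (2), if $(f_1,I_1)\sim(f_2,I_2)$ in $TKK(V)$, choose an essential ideal $K\subseteq I_1\cap I_2$ on which $f_1=f_2$; replacing $K$ by its 3-graded refinement via Remark \ref{cons-cambio-ideal} and intersecting with $V$ gives an essential ideal $K\cap V$ of $V$ by Lemma \ref{ideales-relacion-lema}(i) on which $f_1|_{K\cap V}=f_2|_{K\cap V}$, so the classes in $\mathcal{C}(V)$ coincide. For (3), additivity of $\Psi$ is immediate from the pointwise definition of sums on a common essential ideal. For multiplicativity, given classes $\overline{(f,I)}$ and $\overline{(g,J)}$, a standard construction (using that $g$ commutes with $\mathrm{ad}\,L$ and that preimages of essential ideals are essential) produces a 3-graded essential ideal $K\subseteq J$ with $g(K)\subseteq I$, and the product is represented by $(f\circ g|_K,K)$. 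Restriction to $V$ commutes with composition, and $K\cap V$ is essential in $V$ by Lemma \ref{ideales-relacion-lema}, so that $\Psi(\overline{(f,I)}\cdot\overline{(g,J)})=\Psi(\overline{(f,I)})\,\Psi(\overline{(g,J)})$. Finally, $(\mathrm{id}_L,L)$ restricts to $(\mathrm{id}_V,V)$, so the unit is preserved, and $\Psi$ is a ring homomorphism as claimed.
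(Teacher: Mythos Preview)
Your approach is the same as the paper's, and your treatment of step~(1) matches theirs closely (including the correct handling of the quadratic identity in condition~(b)). However, you underestimate the work required in step~(3). In the Lie extended centroid, restricting a permissible map to a smaller essential ideal is automatic, so sums and products can be compared on any common essential domain. In the Jordan pair extended centroid this is \emph{not} automatic: a $V$-homomorphism $(g,U)$ restricts to a subideal $K\subseteq U$ only if the stability condition $g^\sigma(Q_{K^\sigma}V^{-\sigma})\subseteq K^\sigma$ holds (paper,~2.12). This is precisely why the operations in $\mathcal{C}(V)$ are defined in \cite{pi-ii} on ideals of the form $K\ast V$ rather than on arbitrary intersections, and why the paper's proof of multiplicativity carries out a lengthy explicit verification that both $\Psi(\lambda_1\lambda_2)$ and $\Psi(\lambda_1)\Psi(\lambda_2)$ restrict, as $V$-homomorphisms, to the common essential ideal $(f_2^{-1}(I_1)\cap V)\cap(K\ast V)$.

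Your phrases ``additivity is immediate'' and ``restriction to $V$ commutes with composition'' therefore hide a genuine obstacle: you must check that the two representatives you are comparing in $\mathcal{C}(V)$ can each be restricted (in the sense of $V$-homomorphisms) to a common essential ideal. The computations are routine once one knows to do them, but they are not free, and without them the argument is incomplete.
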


 \begin{proof} Let $\lambda=\overline{(f,I)}\in{\cal C}(TKK(V))$, where $(f,I)$ is a permissible map of $TKK(V)$ as in
  Remark~\ref{cons-cambio-ideal}, so that $I$ is a 3-graded  essential ideal of $TKK(V)$ and $f(I^\sigma)\subseteq V^\sigma$ for $\sigma=\pm$.

Let us denote $f^\sigma=f_{I^\sigma}$, and take $x^\sigma, z^\sigma\in V^\sigma$ and $y^\sigma\in I^\sigma$, for $\sigma=\pm$. Clearly $f_{I\cap V}$ is a linear map, and since we are assuming  $I\cap V^\sigma=\pi_\sigma(I)$, it holds that:
  \begin{align*}  f^\sigma(Q_{x^\sigma}y^{-\sigma}) &=f^\sigma\Big({1\over2}[[x^\sigma,y^{-\sigma}],x^\sigma]\Big)=
    f\Big({1\over2}[[x^\sigma,y^{-\sigma}],x^\sigma]\Big)=\\
&={1\over2}[[x^\sigma,f(y^{-\sigma})],x^\sigma]=
     {1\over2}[[x^\sigma,f^{-\sigma}(y^{-\sigma})],x^\sigma]= Q_{x^\sigma}f^{-\sigma}(y^{-\sigma}),\end{align*}
Moreover, since:
\begin{align*} f^\sigma\big(Q_{I^\sigma} V^{-\sigma}\big)&\subseteq f^\sigma\big(\big[[I^\sigma,V^{-\sigma}],I^\sigma\big]\big)=
   f\big(\big[\big[I^\sigma,V^{-\sigma}\big],I^\sigma\big]\big)=\\
&= \big[\big[f(I^\sigma),V^{-\sigma}\big],I^\sigma\big]
  \subseteq \big[\big[  V^\sigma ,V^{-\sigma}\big],I^\sigma\big]=  \big\{V^\sigma ,V^{-\sigma},I^\sigma\big\}\subseteq I^\sigma, \end{align*} it follows that:
  \begin{align*}(f^\sigma)^2 (Q_{y^\sigma}x^{-\sigma} )&=(f^\sigma)^2({1\over2}[[y^\sigma,x^{-\sigma}],y^\sigma])=
f^2({1\over2}[[y^\sigma,x^{-\sigma}],y^\sigma])=\\
&= {1\over2}f ([[f (y^\sigma),x^{-\sigma}],y^\sigma]) ={1\over2}[[f(y^\sigma),x^{-\sigma}],f(y^\sigma)]= \\
&={1\over2}[[f^\sigma(y^\sigma),x^{-\sigma}],f^\sigma(y^\sigma)]
 =Q_{f^\sigma(y^\sigma)}x^{-\sigma},\end{align*}
and, finally,  we also have:
\begin{align*}f^\sigma\big(\{y^\sigma,z^{-\sigma},x^\sigma\}\big)&=f^\sigma\big([[y^\sigma,z^{-\sigma}],x^\sigma]\big)=
f\big([[y^\sigma,z^{-\sigma}],x^\sigma]\big)=\\
&=[[f(y^\sigma),z^{-\sigma}],x^\sigma]= [[f^\sigma(y^\sigma),z^{-\sigma}],x^\sigma] =\{f^\sigma(y^\sigma),z^{-\sigma},x^\sigma\}.\end{align*}
Hence $(f_{ I\cap V},I\cap V)$ is a $V$-homomorphism, and therefore, by the essentiality of $I\cap V$, it is a  permissible map of
   $V$.

We next claim that $\Psi$ is
well-defined.  To prove this claim,  let
  $(f_1,I_1)$ and $(f_2,I_2)$ be permissible maps of $TKK(V)$  such that
$\overline{(f_1,I_1)}= \overline{(f_2,I_2)}$ in ${\cal C}(TKK(V))$.
By Remark~\ref{cons-cambio-ideal} and \cite[Corollary 2.3]{baxter-martindale}, we can assume that
$I=I_1\cap I_2$ is an  essential 3-graded ideal of $TKK(V)$ such that
$(f_1)_I=(f_2)_I$. Therefore,
 for all  $x^\sigma\in I\cap V^\sigma$  it holds that   $
(f_1)^\sigma(x^\sigma)=f_1(x^\sigma)=f_2(x^\sigma)=(f_2)^\sigma(x^\sigma)$,  that results into
       $\big((f_1)_{ I_1\cap V}, I_1\cap V\big)$ and
$\big((f_2)_{ I_2\cap V}, I_2 \cap V\big)$ being equivalent
permissible maps of $V$ by   \cite[1.4]{pi-ii}.  Thus    $\Psi$ is a
well-defined map.

Let now
 $\lambda_1=\overline{(f_1,I_1)}$ and
$\lambda_2=\overline{(f_2,I_2)}\in{\cal C}(TKK(V))$. By
\cite[p.~1108]{baxter-martindale} and   Remark \ref{cons-cambio-ideal},
 $\lambda_1+\lambda_2=\overline{\big((f_1)_{  I_1\cap I_2}+(f_2)_{  I_1\cap I_2},I_1\cap
 I_2\big)}$,    and therefore,
 $$\Psi(\lambda_1+\lambda_2)=\big[\big((f_1)_{  I_1\cap I_2}+(f_2)_{  I_1\cap I_2}\big)_{  I_1\cap I_2\cap V},I_1\cap I_2\cap
 V\big],  $$ whereas, on the other hand, by \cite[1.11]{pi-ii},
\begin{align*}\Psi(\lambda_1)+\Psi(\lambda_2)&=\big[(f_1)_{  I_1\cap V},I_1\cap V\big]+ \big[(f_2)_{  I_2\cap V},I_2\cap
V\big]=\\
&=\big[(f_1)_K+(f_2)_K,K\big], \end{align*}
where, by \cite[Lemma 1.2(a)]{pi-ii},  $K=(I_1\cap I_2\cap V)\ast V$ is an essential ideal of  $V$
contained in  $ I_1\cap I_2\cap V$. Therefore,
  since $f_i(I_i^\sigma)\subseteq V^\sigma$,\,  $\sigma=\pm$,\
$i=1,2$, for all  $k^\sigma\in K^\sigma$ we have
 $\big((f_1)_{  I_1\cap I_2}+(f_2)_{  I_1\cap I_2}\big)_{I_1\cap I_2\cap
 V}(k^\sigma)=\big((f_1)_K+(f_2)_K\big)(k^\sigma)$ and,
 consequently,
   $\Psi(\lambda_1+\lambda_2)=\Psi(\lambda_1)+\Psi(\lambda_2)$  by \cite[Lemma 1.10]{pi-ii}. Hence the map $\Psi$ is additive.

 To prove that $\Psi$  is a multiplicative map, we note that by
\cite[p.~1108]{baxter-martindale}   $ \lambda_1 \lambda_2=
\overline{\big(f_1f_2,f_2^{-1}(I_1)\big)}  $  and therefore
 $\Psi(\lambda_1\lambda_2)=\big[(f_1f_2)_{f_2^{-1}(I_1)\cap V}, f_2^{-1}(I_1)\cap V\big]$, whereas,
  by \cite[1.13]{pi-ii},  we have:
  $$\Psi(\lambda_1)\Psi(\lambda_2) =\big[(f_1)_{  I_1\cap V}, I_1\cap V\big]\big[(f_2)_{  I_2\cap V}, I_2\cap V\big]=  \big[(f_1f_2)_{K\ast V}, K\ast V\big],$$ for the essential ideal
$K=(I_1\cap I_2\cap V)\ast V$  of $V$
\cite[Lemma 1.2(a)]{pi-ii}.
 We
  claim that   both $\Psi(\lambda_1\lambda_2)$ and
$\Psi(\lambda_1)\Psi(\lambda_2)$ can be restricted to the essential ideal
$(f_2^{-1}(I_1)\cap V)\cap (K\ast V)$ of $V$.

To prove this claim we
first note that, by  Remark \ref{cons-cambio-ideal},  $f_2(I_2^\sigma)\subseteq V^\sigma$ which implies that
   $f_2^{-1}(I_1)\cap V^\sigma\subseteq I_2^\sigma$, and since
  $(I_1\cap I_2\cap V)^\sigma=I_1^\sigma\cap
I_2^\sigma$, we also have
 $(K\ast  V)^\sigma \subseteq I_1^\sigma\cap I_2^\sigma$. Then:
 $$\Big((f_2^{-1}(I_1)\cap V)\cap (K\ast V)\Big)^\sigma =(f_2^{-1}(I_1)\cap V^\sigma)\cap (K\ast V)^\sigma=  f_2^{-1}(I_1) \cap (K\ast V)^\sigma,$$
and it follows that:
\begin{align*}&(f_1f_2)^\sigma\big(Q_{f_2^{-1}(I_1) \cap (K\ast V)^\sigma} V^{-\sigma}\big)=\\
 &=(f_1f_2)^\sigma\big(\big[\big[f_2^{-1}(I_1) \cap (K\ast V)^\sigma,V^{-\sigma}\big],%
 f_2^{-1}(I_1) \cap (K\ast V)^\sigma\big]\big)=\\
 &=(f_1f_2) \big(\big[\big[f_2^{-1}(I_1) \cap (K\ast V)^\sigma,V^{-\sigma}\big],%
 f_2^{-1}(I_1) \cap (K\ast V)^\sigma\big]\big)=\\%
 &=
  f_1   \big(\big[\big[f_2(f_2^{-1}(I_1) \cap (K\ast V)^\sigma),V^{-\sigma}\big],%
  f_2^{-1}(I_1) \cap (K\ast V)^\sigma\big]\big)\subseteq \\%
&\subseteq
  f_1   \big(\big[\big[f_2(f_2^{-1}(I_1)\cap V^\sigma) ,V^{-\sigma}\big],%
  f_2^{-1}(I_1) \cap (K\ast V)^\sigma\big]\big)\subseteq \\%
&\subseteq   f_1\big(\big[\big[I_1\cap V^\sigma,V^{-\sigma}\big],%
  f_2^{-1}(I_1) \cap (K\ast V)^\sigma\big]\big)=\\
&=  f_1\big(\big[\big[I_1^\sigma,V^{-\sigma}\big],%
 f_2^{-1}(I_1) \cap (K\ast V)^\sigma]\big)=\\
&=   \big[\big[f_1(I_1^\sigma),V^{-\sigma}\big], f_2^{-1}(I_1) \cap (K\ast V)^\sigma\big] \subseteq \\ %
  &\subseteq \big[\big[  V^\sigma,V^{-\sigma}\big], f_2^{-1}(I_1) \cap (K\ast V)^\sigma\big]=\\
 & =\{ V^\sigma,V^{-\sigma} , f_2^{-1}(I_1) \cap (K\ast V)^\sigma\}\subseteq
  f_2^{-1}(I_1) \cap (K\ast V)^\sigma,\end{align*}
implying that $\Psi(\lambda_1\lambda_2)$ can be restricted to the ideal
$(f_2^{-1}(I_1)\cap V)\cap (K\ast V)$ of $V$.
 On the other hand, for  $\Psi(\lambda_1)\Psi(\lambda_2)$  we have:
\begin{align*}&((f_1)^\sigma(f_2)^\sigma)\big(Q_{ f_2^{-1}(I_1) \cap (K\ast V)^\sigma}V^{-\sigma}\big)=\\%
&=((f_1)^\sigma(f_2)^\sigma)\big(\big[\big[f_2^{-1}(I_1) \cap (K\ast V)^\sigma,V^{-\sigma}\big],f_2^{-1}(I_1) \cap (K\ast
V)^\sigma\big]\big)=\\
 &= (f_1)^\sigma\big((f_2)^\sigma \big(\big[\big[f_2^{-1}(I_1) \cap (K\ast V)^\sigma,V^{-\sigma}\big],
f_2^{-1}(I_1) \cap (K\ast V)^\sigma\big]\big)\big)=\\%
 &= (f_1)^\sigma\big( f_2  \big(\big[\big[f_2^{-1}(I_1) \cap (K\ast
V)^\sigma,V^{-\sigma}\big], f_2^{-1}(I_1) \cap (K\ast V)^\sigma\big]\big)\big)\subseteq\\%
 &\subseteq (f_1)^\sigma\big(
\big[\big[f_2(f_2^{-1}(I_1) \cap  V^\sigma),V^{-\sigma}\big], f_2^{-1}(I_1) \cap (K\ast V)^\sigma\big]\big) \subseteq\\ &\subseteq
(f_1)^\sigma\big(  \big[\big[I_1\cap V^\sigma,V^{-\sigma}\big], f_2^{-1}(I_1) \cap (K\ast V)^\sigma\big]\big) =\\ &=  f_1 \big(
\big[\big[I_1\cap V^\sigma,V^{-\sigma}\big], f_2^{-1}(I_1) \cap (K\ast V)^\sigma\big]\big) =\\ &=
\big[\big[f_1(I_1^\sigma),V^{-\sigma}\big], f_2^{-1}(I_1) \cap (K\ast V)^\sigma\big]  \subseteq\\%
 &\subseteq \big[\big[  V ^\sigma
,V^{-\sigma}\big],f_2^{-1}(I_1) \cap (K\ast V)^\sigma\big] =\\
&=\big\{V^\sigma,V^{-\sigma},f_2^{-1}(I_1) \cap (K\ast V)^\sigma\big\} \subseteq f_2^{-1}(I_1) \cap (K\ast V)^\sigma. \end{align*}

 Therefore   both   $\Psi(\lambda_1\lambda_2)$ and  $  \Psi(\lambda_1)\Psi(\lambda_2)$  are permissible maps of $V$ that  can be restricted
 to the essential ideal
 $(f_2^{-1}(I_1)\cap V)\cap (K\ast V)$. Moreover,
  since
  $f_i(I_i^\sigma)=f_i^\sigma(I^\sigma)$,   both
   $\Psi(\lambda_1\lambda_2)$ and $ \Psi(\lambda_1)\Psi(\lambda_2)$ agree in  $(f_2^{-1}(I_1)\cap V)\cap (K\ast V)$. This implies that
  $\Psi$  is a multiplicative map and, consequently,  a ring homomorphism.
\end{proof}

 \begin{theorem}\label{isomorfismo-centroides}  The extended centroid ${\cal C}(V)$ of a nondegenerate Jordan pair $V$
 is isomorphic to the extended centroid   ${\cal C}(TKK(V))$ of its
TKK-algebra $TKK(V)$. \end{theorem}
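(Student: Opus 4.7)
The plan is to use the ring homomorphism $\Psi$ of Proposition \ref{aplicacion-centroides} and show it is bijective by exhibiting a two-sided inverse at the level of representatives. Given a class $[g,U] \in \mathcal{C}(V)$ with $g=(g^+,g^-)$ a $V$-homomorphism on an essential ideal $U$ of $V$, I would send $[g,U]$ to the class of a permissible map $(f, \mathcal{I}(U))$ on the essential ideal $\mathcal{I}(U) = U^+ \oplus ([U^+,V^-]+[V^+,U^-]) \oplus U^-$ of $TKK(V)$, whose essentiality is granted by Lemma \ref{ideales-relacion-lema}(ii).

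Concretely I would put $f|_{U^\sigma} := g^\sigma$ on the outer components and, on the middle component,
\[
f([u^+, v^-]) := [g^+(u^+), v^-], \qquad f([v^+, u^-]) := [v^+, g^-(u^-)],
\]
extending $\Phi$-linearly. Then $\Psi \circ \Phi$ is the identity because $f$ restricted to $\mathcal{I}(U)\cap V$ coincides with $g$ by construction. For injectivity of $\Psi$ (which also takes care of $\Phi \circ \Psi = \mathrm{id}$), I would take $\overline{(f,I)} \in \ker \Psi$ and use Remark \ref{cons-cambio-ideal} to assume that $I$ is a $3$-graded essential ideal and $f$ is a $3$-graded map. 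The kernel hypothesis says that $f$ vanishes on some essential ideal $U \subseteq I\cap V$ of $V$, and the commutation of $f$ with $\mathrm{Ad}(TKK(V))$ forces $f([u^+,v^-]) = [f(u^+),v^-] = 0$, and analogously in the other direction; hence $f$ vanishes on $\mathcal{I}(U)$, which is essential by Lemma \ref{ideales-relacion-lema}(ii), proving $\overline{(f,I)}=0$.

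The main obstacle is to verify that the assignment $\Phi$ is well-defined. Two things need to be checked. First, $f$ must be well-defined on the middle component, that is, any $\Phi$-linear dependence $\sum_i [u_i^+, v_i^-] + \sum_j [v_j^+, u_j^-] = 0$ in $L_0 = \mathrm{IDer}\,V$ must be preserved after applying $g^{\pm}$ on the $U$-factors. Rewriting the relation in terms of the inner derivations $\delta(x,y) = (D_{x,y}, -D_{y,x})$ and applying axiom (c) of a $V$-homomorphism yields the corresponding identity on $V^\sigma$; the identification $TKK(V) \cong L/C_V$ of \ref{central extension} then absorbs any contribution vanishing modulo the center $C_V$. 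Second, $f$ must commute with every element of $\mathrm{Ad}(TKK(V))$, which, evaluated on the generators $\mathrm{ad}\,x^\sigma$ and $\mathrm{ad}\,\delta(a,b)$, again reduces to the three $V$-homomorphism axioms for $g$. Finally, independence of the representative $[g,U]$ follows because two equivalent $V$-homomorphisms agree on an essential subideal of $V$, on which the constructed $f$'s then agree after passing to the corresponding $\mathcal{I}(\cdot)$, which is essential in $TKK(V)$ by Lemma \ref{ideales-relacion-lema}(ii).
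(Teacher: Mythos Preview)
Your proposal is correct and follows essentially the same route as the paper: the paper constructs the inverse map $\Upsilon$ (your ``$\Phi$'') by exactly the same formula on $\mathcal{I}(U)$, verifies well-definedness by showing the candidate element centralizes $V^\pm$ and hence vanishes in $TKK(V)$, checks that $(f,\mathcal{I}(U))$ is permissible by testing $[f,\mathrm{ad}\,y]=0$ on homogeneous generators, and then shows $\Psi$ and $\Upsilon$ are mutual inverses. The only cosmetic differences are that the paper computes $\Upsilon\Psi=\mathrm{id}$ directly rather than via injectivity of $\Psi$, and two small points in your write-up: avoid the symbol $\Phi$ for the inverse map since it already denotes the base ring, and note that the well-definedness step needs the linearization of axiom (a) as well as axiom (c) of a $V$-homomorphism (to handle the summands $[v_j^+,u_j^-]$).
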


 \begin{proof} Let $V$ be a  nondegenerate Jordan pair and let us define the map
\[\begin{array}{ccc}\Upsilon:\ {\cal C}(V) & \to & {\cal C}(TKK(V))\\
\,[(g^+,g^-),(U^+,U^-)] &\mapsto & \overline{(f, {\cal
I}(U))}\end{array}\]
where
 $f: {\cal I}(U)\to TKK(V)$ is defined  by
  \begin{align*} &f\Big(u^++\big(\sum_{i=1}^n[u_i^+,v_i^-]
  +\sum_{j=1}^m[v_j^+,u_j^-]\big)+u^-\Big)=\\
  &=g^+(u^+)+
  \Big(\sum_{i=1}^n[g^+(u_i^+),v_i^-]+\sum_{j=1}^m[v_j^+,g^-(u_j^-)]\Big)+ g^-(u^-)\end{align*} for all $u^\sigma$,\ $u^\sigma_i\in U^\sigma$,
  \
   $v^\sigma_i\in
  V^\sigma$, $\sigma=\pm$.

To prove that  $f: {\cal I}(U)\to TKK(V)$ is well-defined, let  $u_i^\sigma$,\ $u_j^\sigma\in
U^\sigma$,\  $v_i^\sigma$,\,  $v_j^\sigma\in V^\sigma$ such that $
\sum_{i=1}^n[u_i^\sigma,v_i^{-\sigma}]
+\sum_{j=1}^m[v_j^\sigma,u_j^{-\sigma}]=0$, and write   $
a=\sum_{i=1}^n[g^\sigma
(u_i^\sigma),v_i^{-\sigma}]+\sum_{j=1}^m[v_j^\sigma,g^{-\sigma
}(u_j^{-\sigma})]$. Then, for all       $w^\sigma\in V^\sigma$,  $\sigma=\pm$:
\begin{align*}
  [a,w^\sigma]&=\sum_{i=1}^n\{g^\sigma (u_i^\sigma),v_i^{-\sigma},w^\sigma\}+\sum_{j=1}^m\{v_j^\sigma,g^{-\sigma}(u_j^{-\sigma}),w^\sigma\}=\\
 &=g^\sigma\big(\sum_{i=1}^n\{ u_i^\sigma,v_i^{-\sigma},w^\sigma \}+\sum_{j=1}^m\{v_j^\sigma ,u_j^{-\sigma},w^\sigma \}\big)=\\
&=g^\sigma \big(\big[ \sum_{i=1}^n[u_i^\sigma ,v_i^{-\sigma}]
+\sum_{j=1}^m[v_j^\sigma ,u_j^{-\sigma}],w^\sigma
\big]\big)=g^\sigma ([0,w^\sigma])=g^\sigma (0)=0.
\end{align*}
which implies   $a\in Z(TKK(V))$. But since,  by  \cite[Proposition 11.25]{FL-libro},    TKK-algebras of nondegenerate Jordan pairs are centerless, it follows that
  $a=0$. Hence the map  $f: {\cal I}(U)\to TKK(V)$ is well-defined.

Now, since, by Lemma \ref{cambio-ideal}(ii), ${\cal
I}(U) $  is an essential ideal of $V$, to prove that
  $ (f, {\cal
I}(U)) $ is a permissible map of $TKK(V)$, it suffices to check that $[f,ad\, y] (  {\cal
I}(U))=0$ for all $y\in TKK(V)$. Write $y=y^++y_0+y^-$, with
$ y_0 =\sum_{i=1}^n\delta(a_i^\sigma,b_i^{- \sigma})=\sum_{i=1}^n\big(D(a_i^\sigma,b_i^{- \sigma}),-D(b_i^{- \sigma},a_i^\sigma)\big) $
for some  $a_i^\sigma\in V^\sigma$ and  $b_i^{- \sigma}\in V^{- \sigma}$. Then, for all $u^\sigma\in U^\sigma$, $\sigma=\pm$,
 $ [f,ad\,
y^\sigma](u^\sigma)=f([y^\sigma,u^\sigma])-[y^\sigma,f(u^\sigma)]=0-[y^\sigma,g^\sigma(u^\sigma)]=0$,
and, it holds that  $ [f,ad\, y^{- \sigma}](u^\sigma) =f([y^{-\sigma},u^\sigma])-[y^{-\sigma},f(u^\sigma)]= [y^{-\sigma},g^\sigma(u^\sigma)]- [y^{-\sigma},g^\sigma(u^\sigma)]=0$.
Moreover,
\begin{align*}[f,ad\, y_0](u^\sigma)&=f([y_0,u^\sigma])-[y_0,f(u^\sigma)]=\\
&=f\big(\sum\{a_i^\sigma,b_i^{- \sigma},u^\sigma\}\big)-\sum\{a_i^\sigma,b_i^{- \sigma},g^\sigma(u^\sigma)\}=\\
&=\sum    g^\sigma\big(\{a_i^\sigma,b_i^{- \sigma},u^\sigma\}\big) -\sum\{a_i^\sigma,b_i^{- \sigma}, g^\sigma(u^\sigma )\}
=0,\end{align*}  and, given $v^\sigma\in V^\sigma$,
\begin{align*}[f,ad\, y^\sigma]([u^\sigma, v^{-\sigma}])&=f( [y^\sigma,[u^\sigma,v^{-\sigma}]])- [y^\sigma,
f([u^\sigma,v^{-\sigma}])]=\\
&=-g^\sigma( \{y^\sigma,v^{-\sigma},u^\sigma\})+ \{y^\sigma,v^{-\sigma},g^\sigma(u^\sigma)\}=0,\end{align*}
and, similarly    $[f,ad\, y^\sigma]([v^\sigma, u^{-\sigma}])=0$.
Finally we have:
 \begin{align*}[f,ad\, y_0]&([u^\sigma, v^{-\sigma}])=f( [y_0,[u^\sigma,v^{-\sigma}]])- [y_0,
f([u^\sigma,v^{-\sigma}])]=\\
&=\sum f\Big( \big[ \{a_i^\sigma,b_i^{-\sigma},u^\sigma\},v^{-\sigma}\big]-
\big[u^\sigma,\{b_i^{-\sigma},a_i^\sigma,v^{-\sigma}\}\big]\Big)-\\
&\quad -[y_0,  [g^\sigma(u^\sigma),v^{-\sigma}]]=\\
&=\sum \Big( \big[g^\sigma(\{a_i^\sigma,b_i^{-\sigma} ,u^\sigma\}),v^{-\sigma}\big]-
\big[g^\sigma(u^\sigma),\{b_i^{-\sigma},a_i^\sigma ,v^{-\sigma}\}\big]\Big)-\\
&\quad -\sum \Big(\big[\{a_i^\sigma,b_i^{-\sigma} ,g^\sigma(u^\sigma)\},v^{-\sigma}\big]-
\big[g^\sigma(u^\sigma),\{b_i^{-\sigma},a_i^\sigma ,v^{-\sigma}\}\big]\Big)=0,\end{align*}
which implies that $[f,ad\, y] (  {\cal
I}(U))=0$, and therefore that  the pair
   $(f,{\cal I}(U))$ is a  permissible map
of $TKK(V)$.

Next we claim that  $\Upsilon$ is a
well-defined map.   To prove this claim  let
    $(g_1,U_1)$ and $ (g_2,U_2)$ be permissible maps of $V$,
 such that      $\mu=[g_1,U_1]= [g_2,U_2]\in {\cal
C}(V)$.  Using \cite[Lemma 1.10]{pi-ii},  let $U$ be an essential
ideal of $V$, contained into  $ U_1\cap U_2$,   such that
$(g_1)_U=(g_2)_U\in Hom_V(U,V)$. Therefore
 $[g_1,U]=[g_2,U]$ in  ${\cal C}(V)$, and  consequently
$\Upsilon([g_1,U])=\overline{(f_1,{\cal I}(U))}$ and
$\Upsilon([g_2,U])=\overline{(f_2,{\cal I}(U))}$  agree  on
${\cal I}(U)$. Hence $\Upsilon$ is well defined.

To complete the proof it suffices to prove that
   $\Upsilon$ is a two-sided inverse for  the map $\Psi$ defined in  Proposition~\ref{aplicacion-centroides}.  Let  $\mu=[g,U]\in
{\cal C}(V)$. Since    ${\cal I}(U)\cap V=U$,   by
 Lemma \ref{cambio-ideal},  it holds that
 $\Psi\Upsilon(\mu)=\Psi\big(\ \overline{\big(f,{\cal I}(U)\big)}\ \big)=[f_{V\ast U},V\ast U]$.  Thus,  since by \cite[Lemma 1.9]{pi-ii}
any permissible map  $(g,U)$ of  $V$  restricts  to $V\ast
U$,  we have   $[f_{V\ast U},V\ast
U]=[g,U]$, which implies that $\Psi\Upsilon=id_{{\cal C}(V)}$.
Conversely  given   $\lambda=\overline{(f,I)}\in{\cal C}(TKK(V))$, again using
 Lemma \ref{cambio-ideal},  we obtain    $I={\cal I}(I\cap V)$ and, therefore that
 $\Upsilon\Psi(\lambda)=\Upsilon([f_{I\cap V}, I\cap V])=\overline{(f,I)} $, which implies that
  $\Upsilon\Psi =id_{{\cal C}(TKK(V))}$.

Therefore,     $\Psi$  and $\Upsilon$ are mutually inverse  ring homomorphisms,
defining  an  isomorphism  between the extended
centroid ${\cal C}(V)$ of a nondegenerate Jordan pair $V$ and the extended
centroid ${\cal C}(TKK((V))$ of its TKK-algebra.
\end{proof}

 \section{The  central closure of the TKK-algebra of a nondegenerate Jordan pair}

We begin this section recalling some facts
  that can be found, or easily
derived, from  \cite{baxter-martindale,emo,pi-ii}.

\begin{remark}\label{representacion-anulada-dos} Let $L$ be a semiprime Lie algebra.
\begin{enumerate}\item[(i)]
Any element
  $x\in {\cal C}(L)\otimes L$  admits a (non necessarily unique)  representation
      $x=\sum\lambda_i\otimes a_i$, where
  $\lambda_i\in{\cal C}(L)$, $a_i\in L$.   A  representation   $\sum\lambda_i\otimes
 a_i$ of $x$ is    $I$-vanishing, for an essential ideal $I$ of $L$, if there exists  $(f_i,I)\in \lambda_i$ such that
  $\sum [f_i(y), p(a_i)]=0$ for all $y\in I$,
    $p\in Ad( L)$   \cite{baxter-martindale}.
\item[(ii)]
The   central closure ${\cal C}(L)L$ of $L$ is ${\cal C}(L)L=\big({\cal C}(L)\otimes L\big)/M$,
where $M$ denotes the set of all $I$-vanishing elements of  ${\cal
C}(L)\otimes L$ for some essential ideal $I$ of $L$
\cite[p.~1111]{baxter-martindale}.
   Moreover, $M$  is  the unique ideal of   ${\cal C}(L)\otimes
 L$ maximal with respect to    $R\subseteq M$ and $M\cap (1\otimes L)=0$,
 where $R$ is the ideal of ${\cal C}(L)\otimes L$ generated by all elements of the form $\lambda\otimes u-1\otimes f(u)$ with $\lambda=\overline{(f,U)}$ and $u\in U$
 \cite[Lemma 2.11]{baxter-martindale}.
  \end{enumerate}\end{remark}

 \begin{lemma}\label{representacion-anulada}  Let $L$ be the TKK-algebra of nondegenerate Jordan pair $V$.   Then
$$R= \Big\{\sum(\rho_i\lambda_i\otimes x_i-\rho_i\otimes f_i(x_i))\mid\rho_i, \lambda_i\in{\cal C}(L), (f_i,I_i)\in\lambda_i,  x_i\in  I_i \Big\} $$
 is a 3-graded ideal of   ${\cal C}(L)\otimes L$  with respect to the grading induced    in
   ${\cal C}(L)\otimes L$.  \end{lemma}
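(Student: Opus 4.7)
My plan is to observe that the set $R$ displayed in the lemma is, by construction, the ${\cal C}(L)$-submodule of ${\cal C}(L)\otimes L$ spanned by the canonical elements $\lambda\otimes u-1\otimes f(u)$ with $\lambda=\overline{(f,I)}$ and $u\in I$; in particular closure under addition is automatic. What must be verified is (a) stability under the Lie bracket of ${\cal C}(L)\otimes L$, and (b) 3-gradedness with respect to the induced grading $({\cal C}(L)\otimes L)_i={\cal C}(L)\otimes L_i$.

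For (a) I would bracket a generic generator $\rho\lambda\otimes x-\rho\otimes f(x)$ against an arbitrary simple tensor $\mu\otimes y$, obtaining
\[ [\mu\otimes y,\,\rho\lambda\otimes x-\rho\otimes f(x)]=\mu\rho\lambda\otimes [y,x]-\mu\rho\otimes [y,f(x)]. \]
Because $(f,I)$ is a permissible map, $f$ commutes with $\operatorname{ad} y\in Ad(L)$ on $I$, and since $I$ is a Lie ideal of $L$ the element $[y,x]$ lies in $I$; hence $[y,f(x)]=f([y,x])$. The bracket therefore equals $\mu\rho\lambda\otimes [y,x]-\mu\rho\otimes f([y,x])$, once again an element of the prescribed form (with $\rho$ replaced by $\mu\rho$ and $x$ by $[y,x]\in I$). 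Closure under arbitrary brackets follows by bilinearity, so $R$ is a Lie ideal.

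For (b) the decisive tool is Remark \ref{cons-cambio-ideal}: every $\lambda\in{\cal C}(L)$ admits a representative $(f,I)$ in which $I$ is a 3-graded essential ideal of $L$ and $f$ is degree-preserving, that is, $f(I\cap L_i)\subseteq L_i$ for $i\in\{-1,0,1\}$. After replacing each $(f_i,I_i)$ occurring in a given element of $R$ by such a 3-graded representative, any $x\in I$ decomposes as $x=x_{-1}+x_0+x_1$ with $x_j\in I\cap L_j$, and then $f(x)=\sum_j f(x_j)$ with $f(x_j)\in L_j$. Consequently each generator splits as
\[ \rho\lambda\otimes x-\rho\otimes f(x)=\sum_{j\in\{-1,0,1\}}\bigl(\rho\lambda\otimes x_j-\rho\otimes f(x_j)\bigr), \]
each summand lying in ${\cal C}(L)\otimes L_j$ while still having the form defining $R$. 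This exhibits the homogeneous components of an arbitrary element of $R$ as elements of $R$, which is precisely 3-gradedness.

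The only step that is not essentially mechanical is (b), and the whole of its difficulty is absorbed into the reduction of Remark \ref{cons-cambio-ideal}: without the possibility of choosing a 3-graded representative for each class $\lambda$, one could not decompose $f(x)$ into homogeneous pieces, and the graded structure of $R$ would not be legible from its generators.
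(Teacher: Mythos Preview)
Your argument for (a) is correct and fills in detail that the paper omits (it merely cites Remark~\ref{representacion-anulada-dos} and \cite[Lemma~3.2]{pi-ii}).

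Part (b), however, has a genuine gap. Remark~\ref{cons-cambio-ideal} produces, for each $\lambda=\overline{(f,I)}$, a 3-graded representative $(\tilde f,\tilde I)$ obtained by \emph{restricting} $f$ to a \emph{smaller} essential ideal $\tilde I\subseteq I$. So when you say ``after replacing each $(f_i,I_i)$ by such a 3-graded representative, any $x\in I$ decomposes\ldots'', you are tacitly assuming that the original $x_i\in I_i$ still lies in $\tilde I_i$; in general it does not, and then the generator $\rho_i\lambda_i\otimes x_i-\rho_i\otimes f_i(x_i)$ cannot simply be rewritten with the pair $(\tilde f_i,\tilde I_i)$. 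What must actually be shown is that the set $R$ is unchanged if one restricts to 3-graded representatives, and this is exactly where the substance lies.

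One way to close the gap (and presumably the content behind the cited \cite[Lemma~3.2]{pi-ii}) is to go in the opposite direction: \emph{extend} $f$ to the graded hull $\mathrm{gr}(I)=\sum_j\pi_j(I)\supseteq I$. For this one checks, using nondegeneracy of $V$, that $\pi_j(f(z))$ depends only on $\pi_j(z)$: for instance, if $\pi_1(z)=0$ then $(\mathrm{ad}\,v)^2z=0$ for every $v\in L_{-1}$, whence $(\mathrm{ad}\,v)^2f(z)=0$, giving $Q_v\,\pi_1(f(z))=0$ for all $v\in V^-$ and thus $\pi_1(f(z))=0$; the case $j=0$ then follows because $TKK(V)$ is centerless. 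The resulting graded extension $(\hat f,\mathrm{gr}(I))\in\lambda$ has $I\subseteq\mathrm{gr}(I)$, so your decomposition argument now goes through verbatim with $(\hat f,\mathrm{gr}(I))$ in place of $(f,I)$.
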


\begin{proof} The statement follows from  Remark \ref{representacion-anulada-dos} and  \cite[Lemma 3.2]{pi-ii}.\end{proof}

 \begin{theorem}\label{isomorfismo-clausuras}  Let  $V$ be a nondegenerate Jordan pair.
  Then, the
 central closure of the TKK-algebra of $V$ is isomorphic to the TKK-algebra of the extended central closure ${\cal C}(V)V$ of $V$.
 \end{theorem}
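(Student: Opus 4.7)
Writing $L=TKK(V)$ for brevity, the plan is to transport ${\cal C}(L)L$ through the isomorphism $\Psi\colon{\cal C}(L)\to{\cal C}(V)$ of Theorem~\ref{isomorfismo-centroides} and identify it with $TKK({\cal C}(V)V)$. The strategy proceeds in three stages: transfer the 3-grading of $L$ to ${\cal C}(L)L$, identify the associated Jordan pair with ${\cal C}(V)V$, and apply Remark~\ref{central extension}.

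For the 3-grading, ${\cal C}(L)\otimes L$ inherits a 3-grading from $L$, and by Lemma~\ref{representacion-anulada} the ideal $R$ is 3-graded. Letting $M^{gr}$ denote the sum of all 3-graded ideals of ${\cal C}(L)\otimes L$ contained in $M$, one has $R\subseteq M^{gr}\subseteq M$ and $M^{gr}\cap(1\otimes L)=0$, so the maximality characterization of $M$ in Remark~\ref{representacion-anulada-dos}(ii) forces $M^{gr}=M$. Thus ${\cal C}(L)L$ inherits the decomposition $({\cal C}(L)L)_1\oplus({\cal C}(L)L)_0\oplus({\cal C}(L)L)_{-1}$, and $[L_1,L_{-1}]=L_0$ implies the resulting 3-grading is Jordan.

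Next I would use $\Psi$ to set up a natural $\Phi$-linear map sending the class of $\lambda\otimes x^\sigma$ (for $x^\sigma\in L_\sigma=V^\sigma$, $\sigma=\pm$) to $\Psi(\lambda)\cdot x^\sigma\in({\cal C}(V)V)^\sigma$. Well-definedness reduces to matching the Lie vanishing condition $\sum[f_i(y),p(x_i)]=0$ (with $y\in I$, $p\in Ad(L)$, where one may take $I$ a 3-graded essential ideal by Remark~\ref{cons-cambio-ideal}) with the Jordan vanishing condition on the essential Jordan ideal $I\cap V$ provided by Lemma~\ref{ideales-relacion-lema}(i); the bridge is that $Ad(L)$ is generated by $ad(L_{\pm 1})$, so testing Lie vanishing against $L_{\pm 1}$-brackets corresponds to testing Jordan vanishing via the triple product, and the $V$-homomorphism identities of Proposition~\ref{aplicacion-centroides} make the two conditions equivalent. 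Since $\{x,y,z\}=[[x,y],z]$ on the grade-$\pm 1$ pieces, this identifies the Jordan pair $(({\cal C}(L)L)_1,({\cal C}(L)L)_{-1})$ with ${\cal C}(V)V$.

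Finally, by Remark~\ref{central extension}, $TKK({\cal C}(V)V)\cong{\cal C}(L)L/C$ with $C=Z({\cal C}(L)L)\cap({\cal C}(L)L)_0$. Since ${\cal C}(V)V$ is nondegenerate, $TKK({\cal C}(V)V)$ is centerless; any $z\in C$ corresponds, through the identification established, to an element of $\hbox{IDer}({\cal C}(V)V)$ annihilating $({\cal C}(V)V)^\pm$, forcing $z=0$ and the desired isomorphism ${\cal C}(L)L\cong TKK({\cal C}(V)V)$. The most delicate step is the translation of vanishing conditions described above: Lie vanishing is formulated via arbitrary $p\in Ad(L)$ while Jordan vanishing uses only the Jordan triple product, so one must carefully exploit the generation of $Ad(L)$ by inner derivations coming from $L_{\pm 1}$ together with the essential-ideal correspondence of Lemma~\ref{ideales-relacion-lema}; once this bridge is in place, the remainder follows from the general TKK theory and the material developed in the previous sections.
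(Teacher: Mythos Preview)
Your strategy differs from the paper's: instead of constructing an explicit 3-graded epimorphism $F\colon{\cal C}(L)\otimes L\to TKK({\cal C}(V)V)$ and proving $\ker F=M$ by direct computation (which is what the paper does), you aim to exhibit ${\cal C}(L)L$ as a Jordan 3-graded Lie algebra with associated pair ${\cal C}(V)V$ and then invoke Remark~\ref{central extension}. The outline is reasonable, but two of the arguments fail as written.

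The maximality argument in your first stage is backward. Remark~\ref{representacion-anulada-dos}(ii) says $M$ is the \emph{largest} ideal containing $R$ and meeting $1\otimes L$ trivially; applied to $M^{gr}$ this yields only $M^{gr}\subseteq M$, which you already have by construction, not $M^{gr}=M$. This can be repaired: since $L$ is 3-graded, $Ad(L)$ is generated by the homogeneous operators $ad(l_k)$ (for $l_k\in L_k$) and is therefore a graded subalgebra of $End_\Phi(L)$; testing the $I$-vanishing condition $\sum[f_i(y),p(a_i)]=0$ against homogeneous $p$ and homogeneous $y\in I$ (using Remark~\ref{cons-cambio-ideal}) then separates degrees and shows each homogeneous component of an $I$-vanishing element is again $I$-vanishing, whence $M$ is 3-graded. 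The paper gets this for free from $M=\ker F$ with $F$ graded. Your third stage is also circular: the natural map $({\cal C}(L)L)_0\to\hbox{IDer}({\cal C}(V)V)$ has kernel precisely $C$, so observing that $z\in C$ maps to zero there does not force $z=0$. What you need is that ${\cal C}(L)L$, as the central closure of a semiprime algebra, is itself semiprime \cite{baxter-martindale} and hence centerless. Finally, once these are patched, the ``matching of vanishing conditions'' in your second stage still carries essentially the same computational load as the paper's verification that $R\subseteq\ker F$ and $M\subseteq\ker F$; your route repackages that work rather than avoiding it.
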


 \begin{proof} Let us denote $L=TKK(V)$  and consider the map:
\[\begin{array}{lcl}
F:\ {\cal C}(L)\times L   & \to & TKK({\cal C}(V)V)\\
\qquad (\lambda,a_\sigma) &\mapsto & \Psi(\lambda)a_\sigma\in{\cal C}(V)V^\sigma \\
\qquad(\lambda,\delta(x,y))&\mapsto & \Psi(\lambda)\delta(x,y)\in \hbox{IDer}({\cal C}(V)V) \end{array}\] %
where $\lambda\in{\cal C}(L)$, $a_\sigma\in V^\sigma$,
$\sigma=\pm$,  $\delta(x,y)\in L_0=\hbox{IDer }V$ and  $\Psi$ is the ring homomorphism defined in
  Proposition~\ref{aplicacion-centroides}.

  It is not difficult to prove that $F$ is a
  well-defined bilinear map  and, using that  $\Psi$ is a ring homomorphism,  it also follows easily that $F$ is a
   balanced map. That is,  for all
 $\lambda$,\,
 $\lambda_1$,\ $\lambda_2\in {\cal C}(L)$,\,
  $l^\sigma\in L_\sigma$,\, $l^\beta\in L_\beta$,\,  $\sigma,\beta\in\{0,\pm \}$ and $\alpha\in \Phi$,  it holds that:
 \begin{enumerate}
 \item[(i)] $ F\big((\lambda_1 +\lambda_2,l^\sigma)-(\lambda_1,l^\sigma)-( \lambda_2,l^\sigma)\big)= \Psi(\lambda_1+\lambda_2)l^\sigma
 -\Psi(\lambda_1 )l^\sigma- \Psi( \lambda_2)l^\sigma~=~0$,
 \item[(ii)] $ F\big((\lambda   ,l^\sigma_1+l^\beta_2)
 -(\lambda,l^\sigma_1 )-(\lambda ,l^\beta_2)\big) =\Psi(\lambda)(l^\sigma_1+l^\beta_2)-\Psi(\lambda)l^\sigma_1- \Psi(
 \lambda)l^\beta_2 =0$,
 \item[(iii)] $ F\big((\alpha\lambda,l^\sigma)-(\lambda,\alpha l^\sigma)\big)=\Psi(\alpha\lambda)
 l^\sigma -\Psi(\lambda)\alpha l^\sigma=0$.
 \end{enumerate}
 This results into $F$ defining
  a (3-graded) homomorphism of
3-graded Lie $\Phi$-algebras (also denoted by $F$)
 $F:{\cal C}(L)\otimes L\to TKK({\cal C}(V)V)$. Moreover, since
  ${\cal C}(V)$ is  von Neumann
regu\-lar   \cite[Theorem 1.15]{pi-ii},  ${\cal C}(V)[V^+,V^-]={\cal C}(V)^2 [V^+,V^-]=[{\cal C}(V)V^+,{\cal C}(V)V^-] $, and  $F$ is an epimorphism.

Next we claim that $Ker F$ equals the ideal $M$  described in Remark \ref{representacion-anulada-dos}(ii).     To prove  this claim  let us first consider an element   $1\otimes a\in Ker F\cap (1\otimes L)$. Then   $0=F(1\otimes a)=\Psi(1)a=a$, that results into
  $Ker F\cap (1\otimes L)=0$. Hence, by Remark~\ref{representacion-anulada-dos}(ii), to prove that   $Ker F\subseteq M$, it suffices to check that
  $R\subseteq Ker F$.

Let  $\lambda\otimes y-1\otimes f(y)\in R$, where
$\lambda=\overline{(f,I)}\in{\cal C}(L)$ and, by Remark \ref{cons-cambio-ideal}, we can assume  that $I$ is  3-graded.
 By Proposition~\ref{aplicacion-centroides}, $\Psi(\lambda)=[f_{I\cap V},I\cap
V]$, and $F (\lambda\otimes y-1\otimes f(y) ) =
 F (\lambda\otimes y)-F(1\otimes f(y) )=\Psi(\lambda)y-\Psi(1)f(y)= \Psi(\lambda)y- f(y)$.
Thus, for any      $ y^\sigma\in I\cap V^\sigma$,
 $\sigma=\pm$, it follows that
 $\Psi(\lambda)y^\sigma-f(y^\sigma)=f^\sigma(y^\sigma)-f(y^\sigma)
 =0$ and, therefore, for all
  $v^\sigma\in V^\sigma$,  \ $u^{-\sigma}$,\ $w^{-\sigma}\in V^{-\sigma}$,  $y^\sigma \in I^\sigma$, the element
$\Psi(\lambda)[v^\sigma,[u^{-\sigma},[w^{-\sigma},y^\sigma]]]-f\big([v^\sigma,[u^{-\sigma},[w^{-\sigma},y^\sigma]]]\big)= \Psi(\lambda)[v^\sigma,[u^{-\sigma},[w^{-\sigma},y^\sigma]]]- [v^\sigma,[u^{-\sigma},[w^{-\sigma},f(y^\sigma)]]]$
defines an inner derivation on  $ {\cal C}(V)V $, such that
 for any
    $a^\sigma\in{\cal C}(V)V^\sigma$:
\begin{align*}&\Big(\Psi(\lambda)[v^\sigma,[u^{-\sigma},[w^{-\sigma},y^\sigma]]]-
[v^\sigma,[u^{-\sigma},[w^{-\sigma},f(y^\sigma)]]]\Big)a^\sigma=\\
&=-\Psi(\lambda)\{v^\sigma,\{u^{-\sigma},y^\sigma, w^{-\sigma}\},a^\sigma\}+
    \{v^\sigma,\{u^{-\sigma},f(y^\sigma), w^{-\sigma}  \},a^\sigma\}=\\%
&=-\{v^\sigma,\Psi(\lambda)\{u^{-\sigma},y^\sigma, w^{-\sigma}\},a^\sigma\}+
   \{v^\sigma,\{u^{-\sigma},f(y^\sigma), w^{-\sigma}  \},a^\sigma\}=\\%
&=-\{v^\sigma,\{u^{-\sigma},\Psi(\lambda)y^\sigma, w^{-\sigma}\},a^\sigma\}+
     \{v^\sigma,\{u^{-\sigma},f(y^\sigma), w^{-\sigma} \},a^\sigma\}=\\%
&=-\{v^\sigma,\{u^{-\sigma},f(y^\sigma), w^{-\sigma}\},a^\sigma\}+
       \{v^\sigma,\{u^{-\sigma},f(y^\sigma), w^{-\sigma} \},a^\sigma\}=0.\end{align*}
Similarly     $
\Big(\Psi(\lambda)[v^\sigma,[u^{-\sigma},[w^{-\sigma},y^\sigma]]]-
[v^\sigma,[u^{-\sigma},[w^{-\sigma},f(y^\sigma)]]]\Big)a^{-\sigma}=0
$ holds for all  $a^{-\sigma}\in{\cal C}(V)V^{-\sigma}$, which implies, by
the maximality of  $M$ (see Remark~\ref{representacion-anulada-dos}), that
  $R\subseteq Ker F\subseteq M$.

To prove   that $M\subseteq Ker F$, take now  $x\in M$ and let  $x= \sum \lambda_i\otimes a_i$ be a
$I$-vanishing representation  of $x$, where   $I$  is assumed to be a 3-graded ideal by Remark \ref{cons-cambio-ideal}. Then
 $F(x)=\sum\Psi(\lambda_i)a_i\in TKK({\cal C}(V)V)$, where $\Psi(\lambda_i)=[(f_i)_{I\cap V},I\cap V]$,
and for all   $u^\sigma\in I\cap V^\sigma$,
    we have
\begin{align*}\big[F(x),u^\sigma\big]&=\big[\sum\Psi(\lambda_i)a_i,u^\sigma\big]=
\sum[\Psi(\lambda_i)a_i,u^\sigma ]=\\
&=\sum[a_i,\Psi(\lambda_i)u^\sigma]=\big[\sum
a_i,f_i(u^\sigma)\big]=0.\end{align*}
Moreover, for all      $v^\sigma\in V^\sigma$,  \ $v^{-\sigma}$,\
$w^{-\sigma}\in V^{-\sigma}$,   $y^\sigma \in I^\sigma$, it holds that:
\begin{align*}& \big[F(x),[v^\sigma,[u^{-\sigma},[w^{-\sigma},y^\sigma]]]\big] =\\%
&=[[F(x),v^\sigma],[u^{-\sigma},[w^{-\sigma},y^\sigma]]]+
  [v^\sigma,[[F(x),u^{-\sigma}],[w^{-\sigma},y^\sigma]]]+\\%
&+[v^\sigma,[u^{-\sigma},[[F(x),w^{-\sigma}],y^\sigma]]]+
 [v^\sigma,[u^{-\sigma},[w^{-\sigma},[F(x),y^\sigma]]]]=\\%
&=[[\sum\Psi(\lambda_i)a_i,v^\sigma],[u^{-\sigma},[w^{-\sigma},y^\sigma]]]+
 [v^\sigma,[[\sum\Psi(\lambda_i)a_i,u^{-\sigma}],[w^{-\sigma},y^\sigma]]]+\\%
&+[v^\sigma,[u^{-\sigma},[[\sum\Psi(\lambda_i)a_i,w^{-\sigma}],y^\sigma]]]+
  [v^\sigma,[u^{-\sigma},[w^{-\sigma},[\sum\Psi(\lambda_i)a_i,y^\sigma]]]]= \\
 &=\sum[[  a_i,v^\sigma],[u^{-\sigma},[w^{-\sigma},f_i(y^\sigma)]]]+
   \sum[v^\sigma,[[  a_i,u^{-\sigma}],[w^{-\sigma},f_i(y^\sigma)]]]+ \\
&+\sum[v^\sigma,[u^{-\sigma},[[  a_i,w^{-\sigma}],f_i(y^\sigma)]]]+
    \sum[v^\sigma,[u^{-\sigma},[w^{-\sigma},[  a_i,f_i(y^\sigma)]]]]=\\
 &=\sum\big[  a_i,[v^\sigma,[u^{-\sigma},[w^{-\sigma},f_i(y^\sigma)]]]\big]= \\%
&=\sum\big[  a_i,f_i\big([v^\sigma,[u^{-\sigma},[w^{-\sigma},y^\sigma]]]\big) \big]=0,\end{align*}%
since  $\sum \lambda_i\otimes a_i$  is a  $I$-vanishing
representation of     $x$. Indeed, it suffices to consider     $p(x)=x$  to be the polynomial
appearing in Remark~\ref{representacion-anulada-dos}.
This implies that   $F(x)$ belongs to  the centralizer of the ideal ${\cal I}({\cal
C}(V)(I\cap V))$  in $TKK({\cal C}(V)V)$, that vanishes, since the essentiality of $I$ in $L$, implies that of ${\cal C}(V)(I\cap V)$ in ${\cal C}(V)V$, and therefore the essentiality of  ${\cal I}({\cal
C}(V)(I\cap V))$  in  $TKK({\cal C}(V)V)$.    Hence
   $F(x)=0$ and
  $ Ker\,F=M$  follows.

Finally since  ${\cal C}(L)L= \big({\cal C}(L)\otimes
  L\big)/M$, we  obtain that
 $F:{\cal C}(L)L\ \to TKK({\cal C}(V)V)$  is an isomorphism of 3-graded Lie algebras.  \end{proof}

\section{Jordan 3-graded Lie algebras with polynomial identities. }

In this section we  begin the  study of   Jordan 3-graded Lie algebras
satisfying essential 3-graded polynomial identities.

\begin{p}\label{free lie}
Following \cite[2.7]{neher-libres},  we
denote by ${\cal L}(X)={\cal L}(X^+\cup X^-)$ the free   Lie algebra on a
polarized set $X=X^+\cup X^-$, which  admits a   $\mathbb{Z}$-grading
${\cal L}(X )=\oplus_{n\in \mathbb{Z}} {\cal L}^{(n)}(X )$,  defined by the map $\vartheta:X\to \mathbb{Z}$ given by $\vartheta(x^\sigma)=\sigma$, for all $x^\sigma\in X^\sigma$, $\sigma=\pm$. The
quotient algebra of ${\cal L}(X)$  by the ideal
  generated by all monomials:
 $$ [x_1^{\sigma_1}[x_2^{\sigma_2}[x_3^{\sigma_3}[\ldots
x_{2n}^{\sigma_{2n}}]\ldots]]],\quad\sigma_1=\sigma_2,\  \sigma_{2i-1}+\sigma_{2i}=0\ \hbox{ with }\  i\geq2,$$ where $\sigma=\pm $,
$x_i^{\sigma_i}\in X^{\sigma_i}$\ and   $n\geq1$, is a 3-graded Lie algebra
${\cal L}(X^+,X^-)={\cal L}(X^+,X^-)_1\oplus {\cal L}(X^+,X^-)_0\oplus {\cal L}(X^+,X^-)_{-1}$, where  ${\cal L}(X^+,X^-)_n$  denotes  the canonical projection of ${\cal L}^{(n)}(X )$ in ${\cal L}(X^+,X^-)$, for $n=0,\pm1$. Then ${\cal L}(X^+,X^-)$ is the  free 3-graded Lie algebra, that is, for every 3-graded Lie algebra
  $G=G_1\oplus G_0\oplus G_{-1}$ and every map   $f:X\to G$  such that  $f(X^\sigma)\subseteq G_\sigma$,\ $\sigma=\pm $,\
 there exists a unique Lie algebra homomorphism
 $F:{\cal L}(X^+,X^-)\to G$
such that  $F\circ  \iota=f$, where $\iota:X\to  {\cal L}(X^+,X^-)$ denotes the canonical map.  Moreover $F$ is  3-graded. It also holds
  (see \cite[2.7]{neher-libres}):
\begin{enumerate}
\item[(i)] The map $ \iota:X\to {\cal L}(X^+,X^-)$ is injective,
\item[(ii)] $\big({\cal L}(X^+,X^-)_1,{\cal L}(X^+,X^-)_{-1}\big) $ is a Jordan pair,
\item[(iii)] $ {\cal L}(X^+,X^-)_0=\big[{\cal L}(X^+,X^-)_1,{\cal L}(X^+,X^-)_{-1}\big]$.
\end{enumerate}
\end{p}

\begin{p}\label{def free Lie 3-graded}
A 3-graded polynomial $f(x_1^+,\ldots,x_n^+,x_1^-,\ldots,x_m^-)\in
{\cal L}(X^+,X^-)$ is a   {\it 3-graded  polynomial identity} of a
3-graded Lie algebra   $L=L_1\oplus L_0\oplus L_{-1}$  if it is
mapped to zero under every homomorphism  $ \varphi: X\to L$  such that
  $\varphi(X^\sigma)\subseteq L_\sigma$ \cite[p. 377]{z-grad}.
\end{p}

\begin{lemma}\label{lemma free objects} Let   $FJP(X^+,X^-)$  be the free Jordan pair on the polarized set $X=X^+\cup X^-$. Then:
\begin{enumerate}
\item[(i)]
  $
  FJP(X^+,X^-)\cong  \big({\cal L}(X^+,X^-)_1,{\cal L}(X^+,X^-)_{-1}\big) $.
  \item[(ii)]  ${\cal L} (X^+,X^-)$  is a central extension of $TKK\big(FJP(X^+,X^-)\big)$, that is,
      $
{\cal L}(X^+,X^-)/C_V \cong TKK\big(FJP(X^+,X^-)\big) $, where
    $$C_V=\big\{x\in {\cal L}(X^+,X^-)_0\mid [x,{\cal
L}(X^+,X^-)_1]=0=[x,{\cal L}(X^+,X^-)_{-1}]\big\}.$$ \end{enumerate}\end{lemma}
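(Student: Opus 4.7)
The plan is to deduce both assertions from the two universal properties already in play: the one defining $FJP(X^+,X^-)$ and the one in \ref{free lie} for the free 3-graded Lie algebra. For (i), I will produce a pair of natural maps running in opposite directions and identify them as mutually inverse by exploiting uniqueness in those universal properties; for (ii), once (i) is established, \ref{central extension} does the work.

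For (i), first I would invoke the universal property of $FJP(X^+,X^-)$ applied to the Jordan pair $\big({\cal L}(X^+,X^-)_1,{\cal L}(X^+,X^-)_{-1}\big)$ (which is a Jordan pair by \ref{free lie}(ii)) and the polarization-preserving map $X\to {\cal L}(X^+,X^-)$ given by $\iota$ of \ref{free lie}; this yields a Jordan pair homomorphism
\[
\alpha:\ FJP(X^+,X^-)\ \longrightarrow\ \big({\cal L}(X^+,X^-)_1,{\cal L}(X^+,X^-)_{-1}\big)
\]
extending the inclusion of $X$. In the other direction, $TKK(FJP(X^+,X^-))$ is a 3-graded Lie algebra whose $\pm 1$ components contain $X^\pm$ via $X^\sigma\hookrightarrow FJP(X^+,X^-)^\sigma\subseteq TKK(FJP(X^+,X^-))_\sigma$; the universal property of ${\cal L}(X^+,X^-)$ from \ref{free lie} then supplies a unique 3-graded Lie algebra homomorphism $\beta:{\cal L}(X^+,X^-)\to TKK(FJP(X^+,X^-))$ extending this map. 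Because $\beta$ is 3-graded and preserves Lie brackets, its restrictions $\beta^\sigma:{\cal L}(X^+,X^-)_\sigma\to FJP(X^+,X^-)^\sigma$ automatically respect the Jordan triple products $\{x,y,z\}=[[x,y],z]$, so $(\beta^+,\beta^-)$ is a Jordan pair homomorphism.

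It remains to show $(\beta^+,\beta^-)\circ\alpha$ and $\alpha\circ(\beta^+,\beta^-)$ are identities. The first composition is a Jordan pair endomorphism of $FJP(X^+,X^-)$ extending the identity on $X$, and hence equals the identity by the uniqueness clause in the universal property of $FJP(X^+,X^-)$. The second composition is an endomorphism of the Jordan pair $\big({\cal L}(X^+,X^-)_1,{\cal L}(X^+,X^-)_{-1}\big)$ fixing $X$; to conclude it is the identity I must verify that this Jordan pair is generated, as such, by $X^+\cup X^-$. This is the one step requiring genuine work and is the main obstacle: ${\cal L}(X^+,X^-)$ is generated as a Lie algebra by $X$, so each element of ${\cal L}(X^+,X^-)_\sigma$ is a sum of iterated brackets of elements of $X^+\cup X^-$ of total grade $\sigma$; using the 3-grading together with the Jacobi identity one rewrites any such bracket as a $\Phi$-linear combination of Jordan triple monomials $\{x_1,y_1,\{x_2,y_2,\ldots\}\}$ in the generators, which lie in the sub-Jordan-pair generated by $X$.

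For (ii), once (i) is available we identify $FJP(X^+,X^-)$ with $\big({\cal L}(X^+,X^-)_1,{\cal L}(X^+,X^-)_{-1}\big)$. Property \ref{free lie}(iii) then says that ${\cal L}(X^+,X^-)_0=[{\cal L}(X^+,X^-)_1,{\cal L}(X^+,X^-)_{-1}]$, so that ${\cal L}(X^+,X^-)$ is itself a Jordan 3-graded Lie algebra whose associated Jordan pair is precisely $FJP(X^+,X^-)$. Applying \ref{central extension} to this Jordan 3-graded Lie algebra yields the isomorphism ${\cal L}(X^+,X^-)/C_V\cong TKK(FJP(X^+,X^-))$, with $C_V$ exactly the subspace of ${\cal L}(X^+,X^-)_0$ annihilating both $\pm 1$ components, as described in the statement.
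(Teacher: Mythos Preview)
Your proposal is correct and follows essentially the same approach as the paper: for (i) the paper simply states that the isomorphism follows from the universal properties of $FJP(X^+,X^-)$ and of ${\cal L}(X^+,X^-)$, and for (ii) it invokes \ref{central extension} together with \ref{free lie}(iii) (via \ref{def free Lie 3-graded}), exactly as you do. Your write-up merely spells out the standard back-and-forth with the universal properties and the generation argument that the paper leaves implicit.
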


\begin{proof}  (i) follows from the universal properties of $FJP(X^+,X^-)$ (see \cite{neher-pi split pairs}) and ${\cal L}(X^+,X^-)$, and (ii) follows from \ref{central extension}  and \ref{def free Lie 3-graded}, since $
C_V= Z({\cal L}(X^+,X^-)\cap {\cal
L}(X^+,X^-)_0$. See also \cite[Lemma 2.8]{neher-libres}
  \end{proof}

 \begin{p}  Let  $Ass[X ]=Ass[X^+\cup X^-]$ be the free associative algebra on the polarized set  of generators $X=X^+\cup X^-$,
 with $\mathbb{Z}$-grading defined by $\vartheta(x^\sigma)=\sigma1$ for  all $x^\sigma\in X^\sigma$. The quotient algebra of
 $Ass[X ]= \oplus_{n\in \mathbb{Z}} Ass^{(n)}[X ]$  by the ideal generated by $\sum_{|n|>1} Ass^{(n)}[X ]$ (equivalently by
 the set $\{x^\sigma y^\sigma\mid x^\sigma, y^\sigma\in X^\sigma,\sigma=\pm\}$) is the
 free 3-graded associative algebra, denoted
 $Ass[X^+, X^-]$,   on  $X=X^+\cup X^-$ \cite[p.~352]{z-grad}.
   We denote by $S{\cal L}(X^+,X^-)$ the subalgebra of $Ass[X^+, X^-]^{(-)}$
generated by the elements of $X$. Then, $S{\cal L}(X^+,X^-)$  is the free special 3-graded Lie
algebra, and, by the universal property of ${\cal L}(X^+,X^-)$,
 there exists a unique homomorphism of 3-graded Lie
algebras   $\pi: {\cal L}(X^+,X^-) \to S {\cal L}(X^+,X^-)$
extending the inclusion  $X \subseteq S {\cal
L}(X^+,X^-)$.  \end{p}

\begin{p}
We will say that a 3-graded  polynomial  $f\in {\cal L}(X^+,X^-)$ is
 essential  if its image      $\pi(f)\in S {\cal
L}(X^+,X^-)\subseteq Ass[X^+,X^-]$ is nonzero and has a monic
leading  term (of highest    $deg$ degree) as an associative
polynomial \cite[p.~377]{z-grad}.
\end{p}

As usual, polynomial identities will be assumed to be multilinear polynomial identities.

\begin{lemma}\label{lem multilinear} Let  $L=L_1+L_0+L_{-1}$   be a 3-graded Lie algebra satisfying
an essential 3-graded polynomial identity   $f\in {\cal L}(X^+,X^-)$ of degree $d$. Then $L$ satisfies a multilinear essential
3-graded polynomial identity of degree less than or equal to $d$.\end{lemma}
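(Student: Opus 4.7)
The plan is to apply the classical two-step multilinearization procedure (multihomogenization followed by full linearization) to $f$ and verify that both essentiality and the degree bound are preserved at each step.

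First, I would replace $f$ by a suitable multihomogeneous component. By the standard iterated partial linearization argument---which uses only that $\frac{1}{2}\in\Phi$ together with sums and differences of evaluations of $f$---each multihomogeneous component of $f$ with respect to the variables $x_1^+,\ldots,x_n^+,x_1^-,\ldots,x_m^-$ is again a $3$-graded polynomial identity of $L$. Since $\pi:{\cal L}(X^+,X^-)\to S{\cal L}(X^+,X^-)\subseteq Ass[X^+,X^-]$ is a homomorphism of multigraded algebras, the multihomogeneous decomposition of $\pi(f)$ is obtained by applying $\pi$ componentwise. Fixing the degree-lexicographic ordering on associative monomials, the monic leading monomial of $\pi(f)$ lies in a unique component, say $\pi(f_0)$, where it still appears with coefficient $1$. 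Hence $f_0$ is an essential $3$-graded polynomial identity of $L$, of total degree $d'\le d$.

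Next I would fully linearize $f_0$. If $f_0$ has multidegree $(d_1,\ldots,d_N)$ with $d_1+\cdots+d_N=d'$, I replace each variable $x_i^{\sigma_i}$ by the sum $y_{1,i}+\cdots+y_{d_i,i}$ of $d_i$ fresh variables (of the same sign $\sigma_i$) and take the fully multilinear component of the resulting polynomial, defining $\tilde f\in{\cal L}(X^+,X^-)$. It is $3$-graded, because the partial linearizations preserve the grading; it is multilinear of total degree $d'$; and it is a polynomial identity of $L$, since it can be written as an iterated partial-linearization difference $\Delta_1^{d_1-1}\cdots\Delta_N^{d_N-1}f_0$, which evaluates to zero on every substitution in $L$.

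The main issue---and the step requiring genuine care---is to ensure that $\tilde f$ remains essential. Because $\pi$ commutes with all partial linearizations, $\pi(\tilde f)$ is precisely the full multilinearization of the associative polynomial $\pi(f_0)$. I would order the new variables $y_{j,i}$ so that they occupy, in sequence, the slots of the repetitions of $x_i^{\sigma_i}$ inside the monic leading monomial of $\pi(f_0)$. With this ordering, the full linearization of the leading monomial of $\pi(f_0)$ produces the corresponding relabelled multilinear monomial with coefficient $1$, and no other multihomogeneous monomial of $\pi(f_0)$ can contribute to this same multilinear monomial, since full linearization preserves the underlying sequence of original variables up to reordering of the labels $j$. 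The delicate point is exactly this no-cancellation observation in the leading slot; once it is verified, $\pi(\tilde f)$ is nonzero with monic leading term, so $\tilde f$ is an essential multilinear $3$-graded polynomial identity of $L$ of degree $d'\le d$, as required.
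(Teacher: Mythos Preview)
Your proposal is correct and follows exactly the classical multilinearization procedure that the paper invokes by citing \cite[6.2.4]{herstein-carus}; the paper gives no argument of its own beyond that reference. Your write-up is in fact more detailed than the paper's, since you explicitly track how the monic leading term of $\pi(f)$ survives both multihomogenization and full linearization, which is the only point specific to the essentiality hypothesis.
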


\begin{proof}  See \cite[6.2.4]{herstein-carus}.\end{proof}

Next we   characterize  essential 3-graded polynomial identities in terms of the     special Lie algebras  $sl(n)$, endowed with  3-gradings defined by decompositions of the form $n=p+q$, for some $p,q\in \mathbb{N}$.

\begin{proposition}\label{prop_sln} Let $f\in {\cal L}(X^+,X^-)$ be a 3-graded polynomial.
Then,   $\pi(f)\in S{\cal L}(X^+,X^-)$ is nonzero if and only if there exist  $p$,\ $q\in \mathbb{N}$ such  that $f$ is not an
identity of  $sl(p+q)$. Thus,  if $\Phi$ is a field,    $f$ is an essential 3-graded  polynomial if and only if  there exist $p$,\ $q\in \mathbb{N}$ such that $f$ is not an identity of
$sl(p+q)$.\end{proposition}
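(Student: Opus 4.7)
The plan is to relate evaluations of $f$ on $sl(p+q)$ to evaluations of $\pi(f)$ on $M_{p+q}$, both carrying the 3-grading induced by the block decomposition $n=p+q$ of the index set. The $(\pm1)$-components of $M_{p+q}$ are the off-diagonal blocks, which (having zero trace) lie in $sl(p+q)$, so $sl(p+q)$ becomes a 3-graded Lie subalgebra of $M_{p+q}^{(-)}$ with $sl(p+q)_{\pm1}=M_{p+q}^{(\pm1)}$. By the universal properties recalled in \ref{free lie} and the analogous property of the free 3-graded associative algebra, any grading-preserving assignment $\varphi:X\to sl(p+q)$ extends to a 3-graded Lie homomorphism $\tilde\varphi:{\cal L}(X^+,X^-)\to sl(p+q)$ and, via $sl(p+q)\hookrightarrow M_{p+q}$, to a 3-graded associative algebra homomorphism $\bar\varphi: Ass[X^+,X^-]\to M_{p+q}$; these are compatible in the sense $\tilde\varphi(f)=\bar\varphi(\pi(f))$.

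The ($\Leftarrow$) direction is then immediate: if some $\varphi$ yields $\tilde\varphi(f)\neq0$, then $\bar\varphi(\pi(f))\neq0$, so $\pi(f)\neq0$. For ($\Rightarrow$), assume $\pi(f)\neq0$; by Lemma \ref{lem multilinear} I may take $f$, and therefore $\pi(f)$, multilinear. Writing $\pi(f)=\sum_w c_w w$ as a linear combination of alternating monomials (the canonical basis of $Ass[X^+,X^-]$), fix $w_0$ with $c_{w_0}\neq0$. I would then construct an off-diagonal matrix unit substitution in some $M_{p+q}$, with $p,q$ large enough compared with $\deg f$, in which the row and column indices are chosen so that the chain of products dictated by $w_0$ telescopes to a single nonzero matrix unit, while every other alternating monomial of $\pi(f)$ either fails the telescoping condition (evaluating to zero) or ends at a different matrix unit (so no cancellation occurs). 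Since all these matrix units lie in $sl(p+q)$, this produces a grading-preserving $\varphi$ with $\tilde\varphi(f)=\bar\varphi(\pi(f))\neq0$, so $f$ is not an identity of $sl(p+q)$.

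The second assertion follows at once: when $\Phi$ is a field, any nonzero $\pi(f)$ can be rescaled to have monic leading term, so essentiality of $f$ is equivalent to $\pi(f)\neq0$, which by the first part is equivalent to $f$ failing to be an identity of some $sl(p+q)$.

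The main obstacle is the combinatorial step in ($\Rightarrow$): avoiding unwanted cancellations among the different alternating monomials of $\pi(f)$ under the chosen substitution. I expect to handle it by direct bookkeeping on row/column indices, assigning them along the telescoping chain of $w_0$ and keeping all other index coincidences forbidden (which is feasible once $p,q\geq\lceil(\deg f+1)/2\rceil$); alternatively, replacing each matrix unit by a generic scalar multiple ensures distinct monomials produce linearly independent polynomial coefficients in the entries, and a subsequent specialization to $\Phi$ recovers a concrete nonzero evaluation.
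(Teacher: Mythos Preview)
Your argument is correct and takes a different, more elementary route than the paper in both directions. For ($\Leftarrow$) you use the compatibility $\tilde\varphi(f)=\bar\varphi(\pi(f))$ directly, which handles $f_1,f_0,f_{-1}$ uniformly in one line; the paper instead identifies $({\cal L}(X^+,X^-)_1,{\cal L}(X^+,X^-)_{-1})$ with the free Jordan pair (Lemma~\ref{lemma free objects}) to treat the $f_\sigma$ components via the free special Jordan pair, and then reduces the $f_0$ case to that one by bracketing with a fresh variable $y^\sigma$. For ($\Rightarrow$) you run the classical staircase matrix-unit substitution (choosing distinct indices $a_1,\dots,a_{k+1}$ so that only the chosen alternating monomial $w_0$ survives and the result $e_{a_1,a_{k+1}}$ is traceless), whereas the paper truncates $Ass[X^+,X^-]$ at degree $k=\deg\pi(f)$, forms the standard imbedding ${\cal E}$ of the associated associative pair, and embeds ${\cal E}$ into $M_n(\Phi)$ via the regular representation; injectivity of this embedding then forces $\bar g\neq0$ to have nonzero image. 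Your approach is more hands-on and gives smaller $p,q$; the paper's is more structural and sidesteps the monomial combinatorics entirely.

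One small imprecision worth fixing: Lemma~\ref{lem multilinear} is about a Lie algebra satisfying an essential identity, not about reducing a fixed polynomial $f$ to a multilinear one. What your staircase argument actually needs is that the full polarization $f^{ml}$ satisfies $\pi(f^{ml})=\pi(f)^{ml}\neq0$ whenever $\pi(f)\neq0$ (clear, since distinct alternating monomials polarize to disjoint sets of labeled monomials with the same coefficients), and that if $f^{ml}$ fails on $sl(p+q)$ then so does $f$ (since $f^{ml}$ lies in the $T$-ideal generated by $f$). This is routine, but Lemma~\ref{lem multilinear} does not literally supply it.
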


\begin{proof}
 Let
 $f=f_1+f_0+f_{-1}\in {\cal L}(X^+,X^-)$ be a 3-graded polynomial such that
  $0\neq \pi(f)=g(x_1^+,\ldots,
x_1^-,\ldots)\in S{\cal L}(X^+,X^-)$. We can assume
$X^\sigma=\{x_1^\sigma,\ldots, x_d^\sigma\}$ for some $d\in \mathbb{N}$ and $\sigma=\pm$.

Let $k=deg(g)$  be   the degree of $g$ as an element
 of $S{\cal L}(X^+,X^-)\subseteq Ass[X^+,X^-]$, defined in the obvious way, and let   $N$  be the ideal of   $Ass[X^+,X^-]$ generated  by all   monomials of degree
  $deg$ strictly greater  that~$k$.
  Then, $N$
      is a 3-graded ideal  $N=N_1+N_0+N_{-1}$ of $Ass[X^+,X^-]$, such that $N_i=N\cap Ass[X^+,X^-]_i$, for $i=0,\pm1$,
  and the quotient algebra
 $A=Ass[X^+,X^-]/N$  is a 3-graded associative algebra.

 Write now $A=A_1+A_0+A_{-1}$, where $A_i=Ass[X^+,X^-]_i/N_i$ for $i\in \{0,\pm\}$.
 Then $  (A_1,A_{-1}) $ is an associative pair, and its   standard imbedding  ${\cal E}$  is a
 finite-dimensional free $\Phi$-module \cite[2.2]{mp-lma pares}.

Following the proof of \cite[Lemma 6.2.1]{herstein-carus}, consider now the regular
representation  $\rho :{\cal E}\to End_\Phi ({\cal   E})$    of    ${\cal E}$ as a right  ${\cal E}$-module.
The $\Phi$-module decomposition of
  ${\cal E}$, in matricial notation:
 \[{\cal E}= \left(\begin{array}{rl}
{\cal E}_{11} &{\cal E}_{12}\\
{\cal E}_{21}&{\cal E}_{22}\end{array}\right)=%
\left(\begin{array}{rl}
{\cal E}_{11} &0\\
{\cal E}_{21}&0\end{array}\right)+%
\left(\begin{array}{rl}
0 &{\cal E}_{12}\\
0&{\cal E}_{22}\end{array}\right)=M_1+M_2,\]
 results into a 3-grading of
$End_\Phi({\cal E}) =End_\Phi({\cal E})_1\oplus End_\Phi({\cal E})_0\oplus End_\Phi({\cal E})_{-1} $,
given by:
\begin{align*} &End_\Phi({\cal E})_1= Hom_\Phi(M_1,M_2),\\
&End_\Phi({\cal E})_0=End_\Phi(M_1)\oplus End_\Phi(M_2),\\
&End_\Phi({\cal E})_{-1}= Hom_\Phi(M_2,M_1), \end{align*}
that makes
   $\rho $  a 3-graded homomorphism. Moreover,  since
${\cal E}$ is an  unital algebra,  $\rho$
 is indeed a 3-graded monomorphism.

Fixing now bases of the free  $\Phi$-modules  $M_1$ and $M_{-1}$, we can obtain a
3-graded isomorphism
  $End_\Phi({\cal E})\cong M_n(\Phi)$, where $n=\dim_\Phi({\cal E})$
  and
  $M_n(\Phi)=M_n(\Phi)_1\oplus M_n(\Phi)_0\oplus M_n(\Phi)_{-1}$  is the
  3-grading given by
\newline
\[M_n(\Phi)= \left(\begin{array}{cc}
0 &M_{p,q}(\Phi)\\
0&0\end{array}\right)\oplus%
\left(\begin{array}{cc}
M_p(\Phi) &0\\
0&M_q(\Phi)\end{array}\right)\oplus%
 \left(\begin{array}{cc}
0 &0\\
M_{q,p}(\Phi)&0\end{array}\right), \] \newline
 where $p=\dim_\Phi(M_1)$ and  $q=\dim_\Phi(M_2)$. (Note that here $p=q$.)
Consequently the  homomorphism
 $\varphi: A\to M_n(\Phi)$,  resulting from  the composition of the inclusion   $A\subseteq {\cal E}$,   the regular
 representation
  $\rho: {\cal  E}\to End_\Phi({\cal E})$ and the above isomorphism  $End_\Phi({\cal E})\cong M_n(\Phi)$,
    is a 3-graded monomorphism.
      Moreover, if  we denote by    $\overline{x_i^\sigma}=x_i^\sigma +N\in
A_\sigma$   the homomorphic images of the elements $x_i^\sigma\in X^\sigma $ in the quotient algebra $A=Ass[X^+,X^-]/N$, then,
 $\varphi(\overline{x_i^\sigma})=a_i^\sigma \in M_n(\Phi)$ is traceless matrix, and, therefore
  $a_i^\sigma\in sl(p+q)$, for all $i=1,\ldots,d$ and $\sigma=\pm$.

Assume now that
 $f$ is a (multilinear, see Lemma \ref{lem multilinear}) polynomial identity   of $sl(p+q)$. Denoting by $\varphi^{(-)}:A^{(-)}\to
M_n(\Phi)^{(-)}$    the Lie algebra  monomorphism   induced by  $\varphi$, it follows that:
\begin{align*} 0&=f(a_1^+,\ldots,a_d^+,a_1^-,\ldots,a_d^-) =f(\varphi(\overline{x_1^+}),\ldots,\varphi(\overline{x_d^+}),\varphi(\overline{x_1^-}),\ldots,\varphi(\overline{x_d^-})) \\&=\varphi(f(\overline{x_1^+},\ldots,\overline{x_d^+},\overline{x_1^-},\ldots,\overline{x_d^-}))   =\varphi(\overline{f( x_1^+,\ldots,x_d^+,x_1^-,\ldots,x_d^-)})=  \varphi(\overline{g}).\end{align*}
But, since   $\varphi^{(-)}$ is a monomorphism, this implies    $\overline{g}=0$,
contradicting  the choice of   $N$.
Hence there exist $p,q\in \mathbb{N}$ such that $f$ is not an identity of $sl(p+q)$.

Conversely, let us suppose  that there exist   $p$,\ $q\in \mathbb{N}$ such that   $f\in {\cal
L}(X^+,X^-)$  is not a polynomial  identity for $sl(p+q)$. Writing
   $n=p+q$, this induces a  3-grading in
    $sl(n)$   such that
 $\big(sl(n)_1,sl(n)_{-1}\big) \subseteq \big(M_{p,q}(\Phi),M_{q,p}(\Phi)\big)
 $
   is a special Jordan pair.

Consider now  the case $f=f_+\in  {\cal L}(X^+,X^-)_1$.  Then, since  $f=f_+ $ is not a polynomial identity of $sl(n)$, by Lemma \ref{lemma free objects}, it follows that  $f=f_+\in
FJP[X^+,X^-]^+$ is not
 a polynomial identity for its associated Jordan pair $\big(sl(n)_1,sl(n)_{-1}\big)$, and therefore $f=f_+$
  has  a nonzero image in     $FSJP[X^+,X^-]$,  the free special Jordan pair.
Hence
  $\pi(f)=\pi(f_+)\neq0$.  The case when $f=f_-\in  {\cal L}(X^+,X^-)_{-1}$  follows analogously.

Suppose next that  $f=f_0\in {\cal L}(X^+,X^-)_0$. We claim that there exists
exists    $y^\sigma\in X^+\cup X^-$ such that
$[f_0,y^\sigma]\neq0$. Indeed, relabeling if necessary,  if  $x_1^{\sigma_1}\ldots
x_{2d}^{\sigma_{2d}}$  is    a monic monomial of highest degree    in
$f_0$ and $y^\sigma$
   a variable  not  appearing in $f_0$ with    $\sigma\neq \sigma_1$ (or $\sigma\neq
   \sigma_{2d}$),  it  suffices to note that the polynomial
$[f_0,y^\sigma]$ contains the   monomial  $y^\sigma
x_1^{\sigma_1}\ldots x_{2d}^{\sigma_{2d}}$ with coefficient 1.

We claim that there exists  a large enough nonnegative integer $m_0$ such that neither $f$ nor  $[f_0,y^\sigma]$ vanish in $sl(m_0)$.  Indeed, the statement for $f$ is clear, since   it does not vanish in any  $sl(m)$  $m\geq n=p+q$, and for  $[f_0,y^\sigma]$ it follows from the case $f=f_\sigma$.

Finally,   replacing $f_0$ by  $g_\sigma=[f_0,y^\sigma]$, if necessary,  we
obtain an  essential polynomial   such that
$\pi(f)=\pi(f_0)\neq~0$.
\end{proof}

We refer to \cite[0.12]{pi-ii}, and references therein,  for the notion of essential polynomial in Jordan systems.

\begin{theorem}\label{th identidad-esencial} Let  $L$ be a Jordan 3-graded Lie algebra
with associated Jordan pair $V$. If $L$ satisfies an essential 3-graded polynomial identity, then the Jordan pair $V$ satisfies an
essential polynomial identity.
\end{theorem}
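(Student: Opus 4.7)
The plan is to extract from an essential 3-graded Lie polynomial identity $f$ of $L$ an essential Jordan polynomial identity of the associated pair $V$, using Lemma \ref{lemma free objects}(i) to identify $FJP(X^+,X^-)$ with the degrees $\pm 1$ of the free 3-graded Lie algebra ${\cal L}(X^+,X^-)$. The essential facts are that evaluation of elements of $FJP(X^+,X^-)^\sigma\subseteq{\cal L}(X^+,X^-)_\sigma$ on $V^\sigma$ agrees with evaluation of the same Lie polynomial on $L_\sigma$ (because the Jordan triple product is precisely $\{x,y,z\}=[[x,y],z]$ by \ref{def lie}), and that the homomorphism $\pi:{\cal L}(X^+,X^-)\to S{\cal L}(X^+,X^-)\subseteq Ass[X^+,X^-]$ is 3-graded and factors through the canonical map from $FJP$ to the free special Jordan pair $FSJP\subseteq Ass[X^+,X^-]$.

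First I would decompose $f=f_1+f_0+f_{-1}$ into its $\mathbb{Z}$-homogeneous components. Since $L$ is 3-graded, each $f_i$ is separately a 3-graded polynomial identity of $L$. Since $\pi$ is 3-graded and the components $\pi(f_i)\in Ass[X^+,X^-]_i$ live in distinct degrees, the leading monomial of $\pi(f)$ (which is monic, because $f$ is essential) belongs entirely to exactly one $\pi(f_{\sigma_0})$; in particular $\pi(f_{\sigma_0})\neq 0$ and $f_{\sigma_0}$ is itself an essential 3-graded Lie polynomial identity of $L$ in the sense of the section.

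I would then split into two cases according to the degree $\sigma_0$. If $\sigma_0=\pm$, then via Lemma \ref{lemma free objects}(i) the polynomial $f_{\sigma_0}$ is an element of $FJP(X^+,X^-)^{\sigma_0}$; by the evaluation remark above it is a Jordan polynomial identity of $V$; and its image in $FSJP^{\sigma_0}\subseteq Ass[X^+,X^-]_{\sigma_0}$ coincides with $\pi(f_{\sigma_0})$, which is nonzero with monic leading term, so $f_{\sigma_0}$ is an essential Jordan polynomial identity of $V$ in the sense of \cite[0.12]{pi-ii}. If $\sigma_0=0$, I would borrow the trick from the last paragraph of the proof of Proposition \ref{prop_sln}: fix a fresh variable $y^\sigma$ of suitable sign not appearing in $f_0$, and replace $f_0$ by $[f_0,y^\sigma]$. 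This remains a polynomial identity of $L$, is 3-homogeneous of degree $\sigma$, and the explicit monomial computation in Proposition \ref{prop_sln} gives that $\pi([f_0,y^\sigma])\neq 0$ with monic leading term. The previous case then applies to $[f_0,y^\sigma]$.

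The main obstacle I expect is purely definitional: one must verify cleanly that the notion of essential polynomial from \cite[0.12]{pi-ii} for Jordan pairs, when pulled back through the identification $FJP^\sigma\cong{\cal L}(X^+,X^-)_\sigma$ and the inclusion $FSJP\subseteq Ass[X^+,X^-]$, is exactly the restriction to degree $\sigma$ of the Lie notion of essentiality introduced in this section, so that a monic leading term of $\pi(f_\sigma)$ in $Ass[X^+,X^-]$ really certifies essentiality on the Jordan side. Once this matching is spelled out, everything else is a bookkeeping of the 3-grading of $\pi$ together with the evaluation identity $\{x,y,z\}=[[x,y],z]$.
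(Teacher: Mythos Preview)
Your proposal is correct and follows essentially the same route as the paper: decompose $f=f_1+f_0+f_{-1}$, use Lemma~\ref{lemma free objects}(i) to read the $\pm1$-components as elements of $FJP(X^+,X^-)$, and handle $f_0$ by the bracket trick $[f_0,y^\sigma]$ from the proof of Proposition~\ref{prop_sln}. The only cosmetic difference is in packaging: the paper writes the resulting Jordan identity as the pair $g=(g_+,g_-)=(f_1+[f_0,y^+],\,f_{-1}+[f_0,y^-])$ and appeals to Proposition~\ref{prop_sln} for essentiality, whereas you isolate the single homogeneous component $f_{\sigma_0}$ carrying the monic leading term of $\pi(f)$ and verify essentiality directly from the $3$-grading of $\pi$; both yield an essential Jordan identity of $V$.
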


 \begin{proof}  By  Lemma \ref{lemma free objects} and Proposition \ref{prop_sln},   it suffices to note that if
  $L$ satisfies an essential
 3-graded polynomial identity
    $f=f_1+ f_0+ f_{-1} $, then $V$ satisfies the essential polynomial identity
  $g=(g_+,g_-)=\big(f_1+[f_0,y^+],f_{-1}+[f_0,y^-]\big)$.
\end{proof}

\section{Posner Rowen's theorem for Jordan 3-graded Lie algebras}

In this  section we attempt to describe strongly prime  Jordan 3-graded Lie algebras
satisfying essential 3-graded polynomial identities. To do this we  relate their polynomial identities to those of their associated Jordan pairs, to then taking advantage of the results on PI Jordan pairs   proved  in \cite{id-pares}.

In this section we will make use of
  Zelmanov's classification of
simple (infinite-dimensional) Lie algebras with a finite
nontrivial $ \mathbb{Z}$-grading  \cite{z-grad}. Therefore from now  $\Phi$ is assumed to be  a field of characteristic zero or
characteristic at least 5.

\begin{remark}\label{grado-identidad} Let  $L$ be a Jordan 3-graded Lie algebra.
If $L$ satisfies an essential
3-graded polynomial identity
   $f=f_1+ f_0+ f_{-1} $, then, by  Theorem \ref{th identidad-esencial},   its associated Jordan pair $V$ satisfies the essential polynomial identity
 $g=(g_+,g_-)=\big(f_1+[f_0,y^+],f_{-1}+[f_0,y^-]\big)$.  Since  $deg(f)={\rm max }\ \{ deg(f_1)\, ,deg(f_0)\, ,deg(f_{-1})\}$, it holds that:
  \begin{enumerate}
\item[(i)] If  $deg(f) =2d-1$  is odd,  then, for some $\sigma=\pm$,
 $deg(f)=deg(f_\sigma)>deg(f_0)$. Thus
 $deg(f)\geq deg(f_0)+1$, and we obtain that
 $deg(g)=\max \{ deg(g_+)\, ,deg(g_-)\}=deg(f )=2d-1$.
\item[(ii)] If  $deg(f) =2(d-1)$ is even, then  $deg(f)=deg(f_0)>deg(f_\sigma)$, for $\sigma=\pm$.  This results into
 $deg(f )\geq deg(f_\sigma ) +1$
and therefore
 $deg(g)=deg(f)+1=2d-1$.\end{enumerate}
Hence, if  $L$  satisfies an essential 3-graded
polynomial identity $f$  of degree  either    even  $2(d-1)$  or odd $2d-1$,
  we can assume that  its associated Jordan pair  $V$ satisfies an essential
polynomial identity  $g=(g_+,g_-)=\big(f_1+[f_0,y^+],f_{-1}+[f_0,y^-]\big)$ of degree $2d-1$.\end{remark}

 \begin{theorem}\label{teorema-posner-rowen} {\rm(Posner-Rowen's Theorem for Jordan 3-graded Lie algebras.)} Let $L$ be a Jordan 3-graded Lie algebra with
  associated Jordan pair  $V$  over a field $\Phi$ of characteristic zero or prime    $p\geq    5$. If  $L$ is
strongly prime and satisfies an essential 3-graded polynomial
identity,
  then its central closure    is simple and, therefore,     isomorphic  to one of following Lie algebras:
 \begin{enumerate}\item[{\rm I.}] $[R^{(-)},R^{(-)}]/Z$, where $R=R_{-1}+R_0+R_1$  is a  simple associative 3-graded algebra,
 finite dimensional over its center,  and
$Z$ is the center of the derived algebra  $[R^{(-)},R^{(-)}]$.
\item[{\rm II.}] $[K(R,\ast),K(R,\ast)]/Z$, where
$R=R_{-1}+R_0+R_1$  is a simple associative  3-graded  algebra,
finite dimensional over its center,
 with involution
 $\ast: R\to R$,  such that   $R_i^\ast=R_i$ for all $i\in\{0,\pm1\}$,  and
$K(R,\ast)=\{a\in R\mid a^\ast=-a\}$. \item[{\rm III.}] The
Tits-Kantor-Koecher algebra of the Jordan algebra of a
symmetric bilinear form. \item[{\rm IV.}] An exceptional Lie
algebra of   type  $E_6$ or $E_7$.\end{enumerate}
Moreover, in cases  I and II  the isomorphism preserves the
grading, that is, it is an isomorphism of 3-graded algebras.
\end{theorem}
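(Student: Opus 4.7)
The plan is to combine three main ingredients: the descent of the essential 3-graded polynomial identity to the associated Jordan pair (via Theorem \ref{th identidad-esencial} and Remark \ref{grado-identidad}), the previously proved Jordan version of Posner-Rowen for strongly prime homotope PI Jordan pairs from \cite{pi-ii, hpi, id-pares}, and Zelmanov's classification of simple Lie algebras with a finite nontrivial $\mathbb{Z}$-grading from \cite{z-grad}. The bridge between the Lie and Jordan settings is provided by Theorem \ref{isomorfismo-clausuras}, which identifies ${\cal C}(L)L$ with $TKK({\cal C}(V)V)$.

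First I would show that under the hypotheses both $V$ and the central closure ${\cal C}(L)L$ sit in a context where Zelmanov's theorem applies. Since $L$ is strongly prime, so is $V$ (use that strong primeness transfers between a Jordan 3-graded Lie algebra and its associated Jordan pair via \cite[Proposition 11.25]{FL-libro}, noting that $L$ and $TKK(V) \cong L/C_V$ differ only by the central ideal $C_V$). By Theorem \ref{th identidad-esencial} together with Remark \ref{grado-identidad}, the Jordan pair $V$ satisfies an essential Jordan polynomial identity, and hence is homotope PI by \cite{id-pares}. Thus $V$ fulfils exactly the hypotheses of the Jordan Posner-Rowen theorem \cite[Theorem 4.7]{pi-ii}, which yields that ${\cal C}(V)V$ is a simple nondegenerate Jordan pair of classical type.

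Next, invoking Theorem \ref{isomorfismo-clausuras} I would identify ${\cal C}(L)L \cong TKK({\cal C}(V)V)$ as 3-graded Lie algebras. Simplicity of the Jordan pair ${\cal C}(V)V$ together with the transfer of simplicity between nondegenerate Jordan pairs and their TKK-algebras \cite[Proposition 11.25]{FL-libro} yields that ${\cal C}(L)L$ is a simple Lie algebra. Moreover, the 3-grading on ${\cal C}(L)L$ is finite, nontrivial and Jordan (inherited from $L$), and the essential 3-graded polynomial identity of $L$ persists in ${\cal C}(L)L$, so the hypotheses of Zelmanov's classification \cite{z-grad} are met over a field of characteristic $0$ or $\geq 5$.

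Finally, I would match Zelmanov's output with the Jordan classification. Zelmanov's theorem lists exactly the four families: (I) Lie algebras of type $[R^{(-)}, R^{(-)}]/Z$ for $R$ a simple 3-graded associative algebra finite-dimensional over its center, corresponding to the Jordan pair of rectangular matrices; (II) $[K(R,\ast), K(R,\ast)]/Z$, corresponding to Hermitian-type Jordan pairs, where the compatibility of $\ast$ with the grading follows from the fact that the TKK-construction of a Jordan pair arising from an associative pair with involution produces a grading-preserving involution on the associative envelope; (III) the TKK of a Jordan algebra of a symmetric bilinear form, corresponding to the quadratic form Jordan pair; and (IV) the exceptional cases $E_6$ and $E_7$, corresponding to the Albert pair and the bi-Cayley pair respectively (the exceptional Jordan pair families that appear in the Jordan Posner-Rowen theorem). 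The main obstacle will be checking case II carefully: one must verify that the involution $\ast$ on $R$ obtained from Zelmanov preserves each homogeneous component $R_i$, which requires tracking the interaction between Zelmanov's involution and the 3-grading induced by the Peirce decomposition; this can be arranged by transporting the involution on the standard imbedding of the associative pair under $V$ across the TKK-isomorphism, since the Jordan classification from \cite{pi-ii} already provides a grading-preserving involution on the associative envelope of $V$. The last assertion, that the isomorphism preserves the grading in cases I and II, follows because Theorem \ref{isomorfismo-clausuras} is an isomorphism of 3-graded Lie algebras and the classification models in I and II come equipped with the natural 3-grading of their Jordan pairs.
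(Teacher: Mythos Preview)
Your approach is essentially the same as the paper's: transfer the essential identity to $V$ via Theorem~\ref{th identidad-esencial}, apply the Jordan Posner--Rowen theorem to obtain that ${\cal C}(V)V$ is simple, use Theorem~\ref{isomorfismo-clausuras} to conclude that ${\cal C}(L)L\cong TKK({\cal C}(V)V)$ is simple, and then invoke Zelmanov's classification. A couple of points deserve adjustment.

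First, Zelmanov's theorem \cite[Theorem~1]{z-grad} does \emph{not} already assert that the simple associative algebra $R$ in cases I and II is finite dimensional over its center; that is an extra conclusion which in the paper is obtained from the PI hypothesis via \cite[p.~57]{jac-pi}. Likewise, Zelmanov's list of exceptional types is not a priori restricted to $E_6$ and $E_7$; the paper rules out the remaining types by appealing to \cite[7.2,~7.3]{neher-118}, which identifies the Jordan 3-gradings occurring on the exceptional algebras. You should cite these two facts rather than fold them into the statement of Zelmanov's theorem.

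Second, your extended discussion of case II (arranging a grading-preserving involution by transporting structure from the standard imbedding of the underlying associative pair) is unnecessary: Zelmanov's classification is of $\mathbb{Z}$-graded Lie algebras, so the isomorphism it produces already respects the grading, and the condition $R_i^\ast=R_i$ is part of the output of \cite[Theorem~1]{z-grad}. You also do not need the persistence of the essential identity in ${\cal C}(L)L$ as a hypothesis for Zelmanov's theorem; simplicity together with the finite nontrivial $\mathbb{Z}$-grading suffices. Finally, the Jordan Posner--Rowen reference in \cite{pi-ii} used here is Theorem~6.1(ii), not 4.7.
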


 \begin{proof}  We first note that being    strongly prime,  then  $L$
 is isomorphic to the TKK-algebra of is associated Jordan pair   $V$, and $V$     is strongly prime (see \ref{central extension} and \cite[Proposition 11.25]{FL-libro}).

By Theorem
   \ref{th identidad-esencial}, $V$ satisfies an essential multilinear
polynomial identity  and, then, by \cite[Proposition~4.6]{id-pares}, $V$ is
homotope-PI.
Therefore, by the Jordan pair analogous of Posner Rowen  theorem
   \cite[Theorem 6.1(ii)]{pi-ii},
  the  extended central closure
 ${\cal C}(V)V$  of $V$ is a simple Jordan pair with  finite capacity.  Moreover, assuming  the degree of the essential polynomial identity of $L$ to be as in Remark \ref{grado-identidad},  and considering that $V$ and ${\cal C}(V)V$ satisfy the same multilinear polynomial identities, we can assume that ${\cal C}(V)V$ has capacity at most $2d$.

 Now,  since by Theorem
   \ref{isomorfismo-clausuras}, the central closure ${\cal C}(L)L$
of $L$ is isomorphic to the TKK-algebra
  $TKK({\cal C}(V)V)$  of the  extended central closure
 ${\cal C}(V)V$  of $V$,  by  \cite[Proposition 11.25]{FL-libro},
 ${\cal C}(L)L$   is a simple Jordan 3-graded Lie
  algebra, and therefore  ${\cal C}(L)L$  is    isomorphic to one of the  Lie algebras listed in \cite[Theorem 1]{z-grad}.

To finish the proof it suffices to note that the associative algebras appearing in cases I and II
are finite dimensional over their centers
by \cite[p. 57]{jac-pi},  and that, by
\cite[7.2, 7.3]{neher-118},  in the exceptional case IV the
only possibilities allowed for   ${\cal C}(L)L$   are types  $E_6$ or $E_7$.
\end{proof}

\section{PI Jordan 3-graded Lie algebras}

We  devote this last section to   arbitrary Jordan 3-graded Lie algebras satisfying essential 3-graded polynomial identities. To cope with the absence of regularity conditions we will consider the  Kostrikin
radical of the Lie algebras.

\begin{p}\label{radicals} An   element $z$ of a Lie algebra $L$ is a {\it crust of a thin sandwich} if $(ad\,z)^2=0$.  Lie algebras
 with no nonzero crusts of thin sandwiches are  nondegenerate Lie algebras (also called  strongly nondegenerate  in the sense of
Kostrikin). The smallest ideal of a Lie algebra $L$ that provides a nondegenerate quotient algebra is the  Kostrikin
radical  of $L$,    denoted by $K(L)$ \cite{k}.
We also recall here that the Jordan counterpart, that is, the  smallest ideal of a Jordan pair $V$ that provides a nongedenerate Jordan system is the McCrimmon radical $Mc(V)$ of $V$  \cite[p.~538-539]{z-radical}.
\end{p}

\begin{remark}  Given an ideal   $I$ of a Lie algebra $L$ we will denote
$\widetilde{I}=\{x\in L\mid [x,L]\subseteq I\}$   the anti-image of the center
    $Z(L/I)$ of the quotient Lie algebra $L/I$ by the canonical projection $L\to L/I$.
\end{remark}

\begin{proposition}\label{pr-lema-uno}  Let $L$ be a   Jordan 3-graded Lie algebra   with associated
Jordan pair  $V$. Then $K(L)=\widetilde{{\cal
I}(Mc(V))}$.\end{proposition}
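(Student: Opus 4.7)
The plan is to prove the two inclusions $K(L)\subseteq\widetilde{{\cal I}(Mc(V))}$ and $\widetilde{{\cal I}(Mc(V))}\subseteq K(L)$ separately, using (a) the $3$-gradedness of both sides, (b) the correspondence in a Jordan $3$-graded Lie algebra between Lie crusts of thin sandwiches sitting in $L_\sigma$ and Jordan absolute zero divisors in $V^\sigma$, and (c) the description \ref{central extension} of $TKK$ as the quotient of a Jordan $3$-graded Lie algebra by its center.

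For the inclusion $K(L)\subseteq\widetilde{{\cal I}(Mc(V))}$, the first step is to observe that ${\cal I}(Mc(V))$ is a $3$-graded ideal of $L$ whose projection onto $V$ is $Mc(V)$, so $L/{\cal I}(Mc(V))$ is Jordan $3$-graded with associated pair $V/Mc(V)$, and this pair is nondegenerate by the very definition of the McCrimmon radical. A short bracket computation ($[[x,V^{-\sigma}],x]=2Q_xV^{-\sigma}$ together with $[L_\sigma,L_\sigma]=0$) shows that a central element of a Jordan $3$-graded Lie algebra lying in $L_{\pm 1}$ is an absolute zero divisor of $V$; hence in our quotient the center lies in the degree-zero piece and agrees with $C_{V/Mc(V)}$. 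Then \ref{central extension} gives $L/\widetilde{{\cal I}(Mc(V))}\cong TKK(V/Mc(V))$, which is nondegenerate by \cite[Proposition 11.25]{FL-libro}. Since $K(L)$ is by definition the smallest ideal yielding a nondegenerate quotient, the inclusion follows.

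For the reverse inclusion, I would first invoke the (straightforward) fact, already signalled in the introduction, that $K(L)$ is $3$-graded, so $K(L)\cap V$ is an ideal of $V$ and $L/K(L)$ is Jordan $3$-graded with associated pair $V/(K(L)\cap V)$. Given an absolute zero divisor $\bar x\in (V/(K(L)\cap V))^\sigma$, the relations $[L_\sigma,L_\sigma]=0$ and $L_0=[L_1,L_{-1}]$ allow one to promote $Q_xV^{-\sigma}\subseteq K(L)$ to $(\mathrm{ad}\,x)^2L\subseteq K(L)$, making $\bar x$ a crust in the nondegenerate Lie algebra $L/K(L)$ and hence zero. Thus $V/(K(L)\cap V)$ is nondegenerate, so $Mc(V)\subseteq K(L)\cap V$, and because $K(L)$ is an ideal of $L$ this yields ${\cal I}(Mc(V))\subseteq K(L)$. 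Finally, any $x\in\widetilde{{\cal I}(Mc(V))}$ satisfies $[x,L]\subseteq{\cal I}(Mc(V))\subseteq K(L)$, so its class $\bar x$ is central in $L/K(L)$; but a nondegenerate Lie algebra has trivial center (a central $z$ satisfies $\mathrm{ad}\,z=0$ and hence $(\mathrm{ad}\,z)^2=0$), so $x\in K(L)$.

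The main obstacle, and the step I would want to write out carefully, is the $3$-graded bookkeeping that identifies Lie crusts in $L_\sigma$ with Jordan absolute zero divisors of $V^\sigma$, namely the verification that $(\mathrm{ad}\,x)^2V^{-\sigma}=0$ already forces $(\mathrm{ad}\,x)^2=0$ on the whole of $L$ when $x\in L_\sigma=V^\sigma$. Once this bridge is in place, together with the $3$-gradedness of $K(L)$, the whole argument reduces cleanly to the central-extension identity \ref{central extension} and to the transfer of nondegeneracy between Jordan pairs and their TKK-algebras provided by \cite[Proposition 11.25]{FL-libro}.
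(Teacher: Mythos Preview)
Your proposal is correct and follows essentially the same route as the paper's proof: both inclusions are obtained by showing $L/\widetilde{{\cal I}(Mc(V))}\cong TKK(V/Mc(V))$ is nondegenerate (via \ref{central extension} and \cite[Proposition 11.25]{FL-libro}) for one direction, and by using the $3$-gradedness of $K(L)$ together with the crust/absolute-zero-divisor correspondence to get $Mc(V)\subseteq K(L)\cap V$ for the other. Your write-up is in fact slightly more explicit than the paper's at two points---the verification that $(\mathrm{ad}\,x)^2V^{-\sigma}=0$ forces $(\mathrm{ad}\,x)^2L=0$ for $x\in L_\sigma$, and the passage from ${\cal I}(Mc(V))\subseteq K(L)$ to $\widetilde{{\cal I}(Mc(V))}\subseteq K(L)$ via the triviality of the center of a nondegenerate Lie algebra---which the paper leaves as assertions or absorbs into the references \cite{z-radical,z-grad}.
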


\begin{proof}  Let   ${\cal I}(Mc(V))$,  defined as in \ref{cambio-ideal}, be the ideal of $L$ generated by the McCrimmon radical $ Mc(V)$  of the Jordan pair $V$.
Then
 $\overline{L}=L/{\cal I}(Mc(V))$ is a Jordan 3-graded Lie algebra, whose associated Jordan pair
  $(\overline{L}_1,\overline{L}_{-1})\cong V/Mc(V)$ is nondegenerate.

Next we claim that  $L/\widetilde{{\cal I}(M(V))}=\overline{L}/C_{(\overline{L}_1,\overline{L}_{-1})}=TKK(\overline{L}_1,\overline{L}_{-1}) $,
 where by   \ref{central extension}, it suffices to check that
 $$ \widetilde{{\cal I}(Mc(V))}/{\cal I}(M(V))=Z(L/{\cal I}(Mc(V)))=C_{(\overline{L}_1,\overline{L}_{-1})}.$$
To prove this claim let
   $\overline{z}_i\in Z(L/{\cal I}(Mc(V)))\cap
\overline{L}_i$,   $i\neq0$. Then  $\overline{z}_i$ is
 an absolute zero divisor in  $(\overline{L}_1,\overline{L}_{-1})$, but since
 $Mc(\overline{L}_1,\overline{L}_{-1})=0$, it follows that
 $\overline{z}_i=0$.  Hence  $Z(L/{\cal I}(Mc(V)))\subseteq
\overline{L}_0$ which implies that
$$ \widetilde{{\cal I}(Mc(V))}/{\cal I}(Mc(V))=Z(L/{\cal I}(Mc(V)))=C_{(\overline{L}_1,\overline{L}_{-1})}.$$
Hence  $ L/\widetilde{{\cal I}(Mc(V))}=
TKK(\overline{L}_1,\overline{L}_{-1}) $, which is a nondegenerate Lie algebra  by  \cite[Proposition 11.25]{FL-libro}, and, therefore, by the minimality of
  the Kostrikin radical,  $K(L)\subseteq \widetilde{{\cal I}(Mc(V))}$ holds.

Conversely,  it is not difficult to prove that the   Kostrikin   radical  $K(L)$   is
a 3-graded ideal  of $L$, and therefore  $L/K(L)$ is a nondegenerate Jordan
3-graded Lie algebra,  with nondegenerate
associated Jordan   pair $ W=\big(
L_1/K_1(L),L_{-1}/K_{-1}(L)\big) $. Moreover   $C_W=Z(L/K(L))=0$,  which implies, by    \ref{central extension},
  that $L/K(L)=TKK(W)$.  Thus  $\big(
L_1/K_1(L),L_{-1}/K_{-1}(L)\big)$ is a nondegenerate Jordan pair
and
 $\big( Mc(L_1),Mc(L_{-1})\big) \subseteq \big(  K_1(L),
 K_{-1}(L)\big)$ holds.
Hence   it follows that $ {\cal I}(Mc(V))\subseteq K(L)$ and,  therefore,   $ \widetilde{{\cal
I}(Mc(V))}\subseteq K(L)$  \cite{z-radical,z-grad}.
\end{proof}

\begin{lemma}\label{pr-lema-dos}  Let $L$ be a   Jordan 3-graded Lie algebra   with associated
Jordan pair  $V$.  Then:
$$\widetilde{{\cal I}(Mc(V))}=%
\bigcap \big\{\widetilde{ {\cal I}(P)}\mid   \hbox{$P$ is a strongly prime ideal of $V$}\ \big\}.$$
\end{lemma}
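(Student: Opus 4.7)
The plan is to prove both inclusions separately, using Proposition \ref{pr-lema-uno} (which identifies $\widetilde{{\cal I}(Mc(V))}$ with the Kostrikin radical $K(L)$) together with the standard fact that $Mc(V)=\bigcap\{P\mid P\text{ strongly prime ideal of }V\}$, a consequence of $V/Mc(V)$ being nondegenerate and of nondegenerate Jordan pairs being subdirect products of strongly prime ones.

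For the inclusion $\widetilde{{\cal I}(Mc(V))}\subseteq \bigcap_P\widetilde{{\cal I}(P)}$, I would fix a strongly prime ideal $P$ of $V$ and observe that the reasoning of Proposition \ref{pr-lema-uno} applies verbatim with $P$ in place of $Mc(V)$, since it only uses that the Jordan pair quotient is nondegenerate. This gives $L/\widetilde{{\cal I}(P)}\cong TKK(V/P)$, which is nondegenerate by \cite[Proposition 11.25]{FL-libro}; the minimality of the Kostrikin radical then yields $K(L)\subseteq \widetilde{{\cal I}(P)}$, and intersecting over all strongly prime $P$ gives the desired containment.

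For the reverse inclusion, take $m\in\bigcap_P\widetilde{{\cal I}(P)}$, so that $[m,L]\subseteq {\cal I}(P)$ for every strongly prime $P$. Since each $\widetilde{{\cal I}(P)}$ is 3-graded, I decompose $m=m_++m_0+m_-$ with every component still in the intersection. For $\sigma=\pm$, the relation $[m_\sigma,L_0]\subseteq {\cal I}(P)\cap L_\sigma=P^\sigma$ (for every strongly prime $P$) gives
\[
\{V^+,V^-,m_\sigma\}=[[V^+,V^-],m_\sigma]\subseteq\bigcap_P P^\sigma=Mc(V)^\sigma.
\]
Passing to the nondegenerate quotient $V/Mc(V)$, the image $\bar m_\sigma$ thus satisfies $\{V^+,V^-,\bar m_\sigma\}=0$, and the trivial inclusion $\{\bar m_\sigma,V^{-\sigma},\bar m_\sigma\}\subseteq\{V^+,V^-,\bar m_\sigma\}$ forces $2Q_{\bar m_\sigma}V^{-\sigma}=0$. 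Hence $\bar m_\sigma$ is an absolute zero divisor of $V/Mc(V)$, so $\bar m_\sigma=0$ and $m_\sigma\in Mc(V)^\sigma\subseteq {\cal I}(Mc(V))\subseteq\widetilde{{\cal I}(Mc(V))}$.

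Finally, for $m_0\in L_0$ the same argument provides $[m_0,V^\sigma]\subseteq Mc(V)^\sigma$ for $\sigma=\pm$, and then the Leibniz rule on $L_0=[L_1,L_{-1}]$ shows $[m_0,L_0]\subseteq [Mc(V)^+,V^-]+[V^+,Mc(V)^-]$. Putting these together yields $[m_0,L]\subseteq {\cal I}(Mc(V))$, i.e.\ $m_0\in\widetilde{{\cal I}(Mc(V))}$, and combining all three components gives $m\in\widetilde{{\cal I}(Mc(V))}$. The only nonroutine step is the passage from $\{V^+,V^-,m_\sigma\}\subseteq Mc(V)^\sigma$ to $m_\sigma\in Mc(V)^\sigma$; this is where the nondegeneracy of $V/Mc(V)$ is essentially used, through the elementary inclusion $\{x,y,x\}\in\{V^+,V^-,x\}$ that converts annihilation by the full triple product into vanishing of $Q_{m_\sigma}V^{-\sigma}$.
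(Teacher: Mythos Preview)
Your proof is correct. The reverse inclusion follows essentially the same route as the paper: decompose into homogeneous components, use that $[m_\sigma,L_0]\subseteq\bigcap_P P^\sigma=Mc(V)^\sigma$ together with nondegeneracy of $V/Mc(V)$ to force $m_\sigma\in Mc(V)^\sigma$, and handle $m_0$ via $[m_0,L_{\pm1}]\subseteq Mc(V)^{\pm}$ and Leibniz. The paper phrases the $i=0$ step as $\bar x_0\in C_{V/Mc(V)}$ and then invokes the identification in Proposition~\ref{pr-lema-uno}, while you spell out the Leibniz computation directly; these amount to the same thing.

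Where you differ is in the forward inclusion. The paper dispatches it in one line by monotonicity: $Mc(V)\subseteq P$ implies ${\cal I}(Mc(V))\subseteq{\cal I}(P)$, hence $\widetilde{{\cal I}(Mc(V))}\subseteq\widetilde{{\cal I}(P)}$. Your argument instead re-runs Proposition~\ref{pr-lema-uno} with $P$ in place of $Mc(V)$ to get $L/\widetilde{{\cal I}(P)}\cong TKK(V/P)$ nondegenerate, and then appeals to the minimality of $K(L)=\widetilde{{\cal I}(Mc(V))}$. This is valid (strong primeness of $V/P$ supplies the needed nondegeneracy), but it is a detour: the elementary monotonicity of $U\mapsto\widetilde{{\cal I}(U)}$ already does the job without invoking the Kostrikin radical at all.
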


\begin{proof} Since, by  \cite{mc-radical},
 $Mc(V)= \bigcap \big\{P \mid   \hbox{$P$ is a strongly prime ideal of $V$}\ \big\}$,   it is straightforward that
 $\widetilde{ {\cal I}(Mc(V))  }\subseteq \widetilde{{\cal I}(P)} $  for all strongly prime ideals $P$ of $V$.
    To prove the reverse containment,  let
 $$x_i\in \bigcap \big\{\widetilde{ {\cal I}(P)}\mid   \hbox{$P$ is a strongly prime ideal of $V$}\ \big\}\cap
 L_i,\quad i\in\{\pm1,0\}.$$
If  $i=\pm1$,  by \ref{ideales-relacion-p},   $x_i\in \widetilde{{\cal I}(Mc(V))}_i$.   Otherwise  $i=0$ and then, for any strongly prime ideal $P$ of $V$ it holds that
 $[x_0,L]\subseteq {\cal I}(P)$, which implies
$[x_0,L_1+L_{-1}]\subseteq Mc(V)$, and   therefore  that  $x_0\in C_{V/Mc(V)}$.
Hence  $x_0\in \widetilde{{\cal I}(Mc(V))}$.\end{proof}

\begin{proposition}\label{producto-subdirecto-lie-jordan} Let $L$ be Jordan 3-graded Lie algebra.  Then
$L/K(L)$ is a subdirect product of strongly prime Jordan 3-graded Lie algebras.\end{proposition}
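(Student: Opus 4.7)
The plan is to combine Proposition \ref{pr-lema-uno} and Lemma \ref{pr-lema-dos} to realize $K(L)$ as an intersection of ideals of the form $\widetilde{{\cal I}(P)}$, one for each strongly prime ideal $P$ of the associated Jordan pair $V$, and then to identify each quotient $L/\widetilde{{\cal I}(P)}$ as the TKK-algebra of a strongly prime Jordan pair, hence as a strongly prime Jordan 3-graded Lie algebra.

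First I would invoke Proposition \ref{pr-lema-uno} and Lemma \ref{pr-lema-dos} to write
\[
K(L) \;=\; \widetilde{{\cal I}(Mc(V))} \;=\; \bigcap_{P}\widetilde{{\cal I}(P)},
\]
where $P$ ranges over all strongly prime ideals of $V$. This intersection description is the structural backbone of the argument: it yields an injective Lie algebra homomorphism
\[
L/K(L)\;\hookrightarrow\;\prod_{P} L/\widetilde{{\cal I}(P)},
\]
together with the projections onto each factor being surjective, so that $L/K(L)$ is (by definition) a subdirect product of the family $\{L/\widetilde{{\cal I}(P)}\}_P$.

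Next I would show that each factor $L/\widetilde{{\cal I}(P)}$ is a strongly prime Jordan 3-graded Lie algebra. The argument mirrors the proof of Proposition \ref{pr-lema-uno}: since $P$ is 3-graded as an ideal of $V=(L_1,L_{-1})$, the ideal ${\cal I}(P)$ is 3-graded in $L$ (by \ref{ideales-relacion-p}(ii)), and so is $\widetilde{{\cal I}(P)}$; hence the quotient $L/\widetilde{{\cal I}(P)}$ inherits a 3-grading whose associated Jordan pair is $V/P$. The same reasoning used in \ref{pr-lema-uno} to remove the central part shows that $L/\widetilde{{\cal I}(P)}\cong TKK(V/P)$. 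Since $P$ is a strongly prime ideal of $V$, the quotient $V/P$ is a strongly prime Jordan pair, and by \cite[Proposition 11.25]{FL-libro} its TKK-algebra is a strongly prime (Jordan 3-graded) Lie algebra.

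The step I expect to require the most care is the identification $L/\widetilde{{\cal I}(P)}\cong TKK(V/P)$, together with the verification that the canonical projection $L\to L/\widetilde{{\cal I}(P)}$ is compatible with the 3-grading and with the Jordan pair structure; but all of this is obtained by running the argument of Proposition \ref{pr-lema-uno} verbatim, replacing $Mc(V)$ by the strongly prime ideal $P$ and using that $V/P$ is nondegenerate (as it is strongly prime). Putting the three steps together yields the subdirect decomposition.
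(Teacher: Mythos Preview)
Your proposal is correct and follows essentially the same approach as the paper: the paper's proof also invokes Proposition~\ref{pr-lema-uno} and Lemma~\ref{pr-lema-dos} to write $K(L)=\bigcap_P\widetilde{{\cal I}(P)}$, and then observes that each quotient $L/\widetilde{{\cal I}(P)}$ is a strongly prime Jordan 3-graded Lie algebra. Your write-up is in fact more explicit than the paper's, since you spell out the identification $L/\widetilde{{\cal I}(P)}\cong TKK(V/P)$ and the appeal to \cite[Proposition~11.25]{FL-libro}, whereas the paper leaves this as a remark.
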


\begin{proof}  By Proposition \ref{pr-lema-uno} and  Lemma \ref{pr-lema-dos},  it  now suffices  to note that for  any strongly prime ideal
  $P$ of the Jordan pair  $V=(L_1,L_{-1})$, the quotient algebra  $L/ \widetilde{ {\cal I}(P)}$
    is a strongly prime Jordan 3-graded Lie algebra. \end{proof}

 \begin{theorem} Let  $L$ be a Jordan 3-graded Lie algebra. If $L$ satisfies an
 essential 3-graded polynomial identity,  then  the nondegenerate Jordan 3-graded Lie algebra $L/K(L)$
 is a
 subdirect product
of strongly prime Jordan 3-graded Lie algebras  satisfying the
same essential 3-graded polynomial identity. Therefore, the
central closure of each subdirect factor is isomorphic to one of the algebras   listed in Theorem \ref{teorema-posner-rowen}.
\end{theorem}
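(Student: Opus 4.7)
The plan is to combine the subdirect decomposition from Proposition \ref{producto-subdirecto-lie-jordan} with the strongly prime Posner-Rowen theorem from Theorem \ref{teorema-posner-rowen}. The only thing to verify beyond those two results is that the subdirect factors inherit the essential 3-graded polynomial identity, which requires that the relevant ideals be 3-graded so that the quotients are themselves Jordan 3-graded Lie algebras.

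First, I would apply Proposition \ref{producto-subdirecto-lie-jordan} to conclude that $L/K(L)$ embeds as a subdirect product $\prod L/\widetilde{\mathcal{I}(P)}$, where $P$ ranges over the strongly prime ideals of the associated Jordan pair $V=(L_1,L_{-1})$, and each $L/\widetilde{\mathcal{I}(P)}$ is a strongly prime Jordan 3-graded Lie algebra. To see that this is actually a decomposition in the category of 3-graded Lie algebras, I would check that $\widetilde{\mathcal{I}(P)}$ is a 3-graded ideal of $L$: the ideal $\mathcal{I}(P)=P^+\oplus([P^+,V^-]+[V^+,P^-])\oplus P^-$ is 3-graded by \ref{ideales-relacion-p}(ii), and if $x=x_1+x_0+x_{-1}\in L$ satisfies $[x,L]\subseteq\mathcal{I}(P)$, then projecting onto each homogeneous component of $\mathcal{I}(P)$ shows that each $x_i$ also satisfies $[x_i,L]\subseteq\mathcal{I}(P)$. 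Hence $\widetilde{\mathcal{I}(P)}$ is 3-graded and each factor $L/\widetilde{\mathcal{I}(P)}$ inherits a 3-grading.

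Next, since polynomial identities are preserved by 3-graded homomorphic images (the universal property of $\mathcal{L}(X^+,X^-)$ in \ref{free lie}), the essential 3-graded polynomial $f$ satisfied by $L$ is automatically a 3-graded polynomial identity of every quotient $L/\widetilde{\mathcal{I}(P)}$. Essentiality itself is an intrinsic property of $f\in\mathcal{L}(X^+,X^-)$ (nonvanishing image in $S\mathcal{L}(X^+,X^-)$ with monic leading associative term), so it does not depend on the algebra; hence the same $f$ is an essential 3-graded polynomial identity of every subdirect factor.

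Finally, each factor $L/\widetilde{\mathcal{I}(P)}$ is a strongly prime Jordan 3-graded Lie algebra over $\Phi$ satisfying an essential 3-graded polynomial identity, so Theorem \ref{teorema-posner-rowen} applies and its central closure is isomorphic to one of the four types listed there. The main conceptual step is really just the observation that $\widetilde{\mathcal{I}(P)}$ is 3-graded, everything else being a direct assembly of the previous results; the hardest work was already done in proving Proposition \ref{pr-lema-uno}, Lemma \ref{pr-lema-dos}, and Theorem \ref{teorema-posner-rowen}.
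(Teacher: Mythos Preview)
Your proposal is correct and follows essentially the same route as the paper: invoke Proposition \ref{producto-subdirecto-lie-jordan} for the subdirect decomposition, note that the essential identity passes to each (3-graded) quotient, and then apply Theorem \ref{teorema-posner-rowen} to every factor. You give more detail than the paper does (explicitly verifying that $\widetilde{\mathcal{I}(P)}$ is 3-graded and spelling out why identities descend), but the structure of the argument is the same.
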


 \begin{proof}  Let $K(L)$ be the Kostrikin radical of the Jordan 3-graded Lie algebra $L$. Then, by \ref{radicals}, the quotient Lie  algebra
   $L/K(L)$ is   nondegenerate, and  it is Jordan 3-graded by Proposition \ref{pr-lema-uno}, since as noted before
      $K(L)$ a 3-graded ideal of $L$.
     Moreover, by Proposition \ref{producto-subdirecto-lie-jordan}, $L/K(L)$
  is a subdirect product of strongly prime Jordan 3-graded Lie algebras     $\{L_\lambda\}_{\lambda\in \Lambda}$, all them
 satisfying the same essential 3-graded polynomial identities  as the Lie algebra   $L$. Hence, for each   $\lambda$, the
  central closure ${\cal C}(L_\lambda)L_\lambda$ of  $L_\lambda$ is isomorphic to one of the algebras listed in Theorem
     \ref{teorema-posner-rowen}.\end{proof}

 \section*{Declaration of competing interest}

None.


\end{document}